\title[Crystal energy via charge]{Crystal energy functions via the charge\\ in types $A$ and $C$}
\author[C.~Lenart]{Cristian Lenart}
\address{Department of Mathematics and Statistics, State University of New York at Albany, 
Albany, NY 12222, U.S.A.}
\email{lenart@albany.edu}
\urladdr{http://www.albany.edu/\~{}lenart/}
\thanks{Partially supported by the Research in Pairs program of Mathematiches Forschungsinstitut Oberwolfach, the NSF grant DMS--1101264, and an Individual Development Award from SUNY Albany.}
\author[A.~Schilling]{Anne Schilling}
\address{Department of Mathematics, University of California, One Shields
Avenue, Davis, CA 95616-8633, U.S.A.}
\email{anne@math.ucdavis.edu}
\urladdr{http://www.math.ucdavis.edu/\~{}anne}
\thanks{Partially supported by the NSF grants DMS--0652641, DMS--0652652, DMS--1001256, the ``Research in Pairs'' program 
by the Mathematiches Forschungsinstitut Oberwolfach in 2011, and the Hausdorff Institut in Bonn.}
\keywords{}
\subjclass[2000]{Primary 05E05. Secondary 33D52, 20G42.}
\DeclareMathOperator{\lev}{lev}
\newlength{\cellsize}
\newcommand\tableau[1]{
\vcenter{
\let\\=\cr
\baselineskip=-16000pt
\lineskiplimit=16000pt
\lineskip=0pt
\halign{&\tableaucell{##}\cr#1\crcr}}}
\newcommand{\tableaucell}[1]{{%
\def \arg{#1}\def \void{}%
\ifx \void \arg
\vbox to \cellsize{\vfil \hrule width \cellsize height 0pt}%
\else
\unitlength=\cellsize
\begin{picture}(1,1)
\put(0,0){\makebox(1,1){$#1$}}
\put(0,0){\line(1,0){1}}
\put(0,1){\line(1,0){1}}
\put(0,0){\line(0,1){1}}
\put(1,0){\line(0,1){1}}
\end{picture}%
\fi}}
\numberwithin{equation}{section}
\theoremstyle{plain}
\newtheorem{theorem}{Theorem}[section]
\newtheorem{proposition}[theorem]{Proposition}
\newtheorem{lemma}[theorem]{Lemma}
\newtheorem{corollary}[theorem]{Corollary}
\newtheorem{definition}[theorem]{Definition}
\newtheorem{example}[theorem]{Example}
\newtheorem{algorithm}[theorem]{Algorithm}
\newtheorem{arule}[theorem]{Rule}
\theoremstyle{remark}
\newtheorem{remark}[theorem]{Remark}
\def\Z{\mathbb{Z}}
\def\charge{{\rm charge}}
\def\cw{{\rm cw}}
\def\wt{{\rm wt}}
\def\g{\mathfrak{g}}
\begin{document}

\bibliographystyle{plain}

\begin{abstract} 
The Ram--Yip formula for Macdonald polynomials (at $t=0$) provides a statistic which we call charge.
In types $A$ and $C$ it can be defined on tensor products of Kashiwara--Nakashima single column crystals.
In this paper we prove that the charge is equal to the (negative of the) energy function on affine
crystals. The algorithm for computing charge is much simpler and can be more efficiently computed 
than the recursive definition of energy in terms of the combinatorial $R$-matrix. 
\end{abstract}

\maketitle

\section{Introduction}

The energy function of affine crystals is an important grading used in one-dimensional configuration 
sums~\cite{HKOTT:2002, HKOTY:1999} and generalized Kostka polynomials~\cite{SW:1999,Sh:2001,Sh:2002}. 
It is defined by the action of the affine Kashiwara crystal operators through a local combinatorial rule and the 
$R$-matrix.

From a computational perspective, the definition of the energy is not very efficient,
as it involves a recursive definition of a local energy, and also the combinatorial $R$-matrix, for which not in
all cases efficient algorithms exist. This leads us to the role of the charge statistic, which can be calculated very 
efficiently, as it only involves the detection of descents and the computation of arm lengths of cells in Young diagrams.

Charge was originally defined in type $A$ by Lascoux and Sch\"utzenberger~\cite{LS:1979} as a statistic on 
words with partition content. It is calculated by counting certain cycles in the given word; see 
Section~\ref{section.chargerev}. Lascoux and Sch\"utzenberger showed that the charge can also be defined 
as the grading in the cyclage graph, and used it to express combinatorially the Kostka--Foulkes 
polynomials, or Lusztig's $q$-analogue of weight multiplicities \cite{lusscq}, based on their Morris recurrence. In type $A$, 
Nakayashiki and Yamada~\cite{NY:1997} analyzed the subtle combinatorial relationship between charge and 
the $R$-matrix, showing that the energy coincides with the charge. In~\cite{kmopdc} it was observed that the 
cyclage is related to the action of the crystal operator $f_0$ on a tensor product of type $A$ columns (by the 
Kyoto path model, the latter can be identified with an affine Demazure crystal). Thus, the results of 
Lascoux--Sch\"utzenberger and Nakayashiki--Yamada are rederived in a more conceptual way. See also the
work of Shimozono \cite{Sh:2001,Sh:2002} for a more extensive discussion of the combinatorics involved 
in~\cite{kmopdc}, in the more general context of tensor products of type $A$ Kirillov--Reshetikhin (KR) crystals of 
arbitrary rectangular shapes, as opposed to only column shapes. Charge for KR crystals of rectangular
shape (or Littlewood--Richardson tableaux) was also defined in~\cite{SW:1999} using cyclage.

Lecouvey~\cite{leckfp,lecccg} extended two approaches to the Lascoux--Sch\"utzenberger charge, namely 
cyclage and catabolism, to types $B$, $C$, and $D$. He thus defined a charge statistic 
on the corresponding Kashiwara--Nakashima (KN) tableaux~\cite{kancgr}. But he was only able to relate his 
charge to the corresponding Kostka--Foulkes polynomials in very special cases, as the original idea of 
Lascoux--Sch\"utzenberger based on the Morris recurrence, which he pursued, has limited applicability in this case.

In this paper we use a charge statistic coming from the Ram--Yip formula~\cite{raycfm} for Macdonald polynomials 
$P_\mu(x;q,t)$ of arbitrary type~\cite{macaha} at $t=0$. The terms in this formula correspond to certain chains of Weyl group 
elements which come from the alcove walk model (this was defined in \cite{gallsg,lapawg,lapcmc}, and was then developed in
subsequent papers, including~\cite{raycfm}). The statistic is defined on the mentioned chains, and describes the powers 
of $q$. In~\cite{lenmpc} it is shown that, in types $A$ and $C$, the chains are in bijection with elements in a tensor product 
of KR crystals of the form $B^{k,1}$. It is also shown that, under this bijection, the above statistic can be translated 
into a statistic on the elements of the mentioned crystal, which we call charge. Thus, we have
\begin{equation}\label{p1}
	P_\mu(x;q,0) = \sum_{b \in B^{\mu_1',1}\otimes B^{\mu_2',1}\otimes\ldots} 
	q^{\charge(b)} x^{\wt(b)}.
\end{equation}
In type $A$, one can rewrite this formula as an expansion of the Macdonald $P$-polynomials in terms of 
Schur polynomials $s_\lambda(x)$
\begin{equation}\label{p2}
	P_\mu(x;q,0) = \sum_{\lambda} K_{\lambda' \mu'}(q) \; s_\lambda(x)\,;
\end{equation}
here $K_{\lambda'\mu'}(q)$ is the Kostka--Foulkes polynomial and $\lambda'$ denotes the transpose of 
the partition $\lambda$. A generalization of \eqref{p2} to simply-laced types was given in~\cite{ionnmp}; in types $A$ and $D$, 
this result is sharpened in \cite[Section 9.2]{ST:2011} by replacing the Kostka--Foulkes polynomials with the corresponding 
one-dimensional configuration sums (which are generating functions for the energy). 
Both \eqref{p1} in type $A$ and~\eqref{p2} are expressed combinatorially in terms of the 
Lascoux--Sch\"utzenberger charge, whereas the type $C$ charge given by~\eqref{p1} is a new statistic. It is worth noting that the 
main ingredient in these charge constructions is the quantum Bruhat graph~\cite{bfpmbo}, which first arose in 
connection to Chevalley multiplication formulas for the quantum cohomology of flag varieties.

Due to Schur--Weyl duality, weight multiplicities and tensor product multiplicities are both given by the Kostka matrix in type $A$.
In fact, the Kostka--Foulkes polynomials are equal to Lusztig's $q$-analogue $K$ of weight multiplicities as well as
the one-dimensional configurations sums $X$, which are $q$-analogues of tensor product multiplicities. For other types,
such correspondences are only known for large rank (or in the stable case); see for
example~\cite{Lecouvey:2006a,LOS:2010,SZ:2006}. 
As is evident from Equation~\eqref{p1} the charge we are concerned with in this paper is naturally related to $q$-analogues of 
tensor product multiplicities or the one-dimensional sums $X$.

The goal of this paper is to show in an efficient, conceptual way that the charge in~\cite{lenmpc} coincides with 
the energy function on the corresponding tensor products of KR crystals.  We focus on types $A$ and $C$, and expect to 
extend these results to types $B$ and $D$. There are several reasons for not yet addressing the case of arbitrary types. 
First of all, the elements of the crystals $B^{k,1}$ of classical types are represented in a very explicit way, by KN columns. 
In fact, concrete descriptions of certain KR crystals of exceptional type have only been given in special cases; see for
example~\cite{jasast, KMOY:2007, MMO:2010, Yamane:1997}.
Furthermore, the stable one-dimensional configuration sums studied in~\cite{LOS:2010} are all of classical type. 
However, we do expect that the main result of this paper would generalize to arbitrary types, if we were to phrase it in the 
context of the alcove walk model and the statistic in the Ram--Yip formula mentioned above (see Section~\ref{generalize}).

We use the recent reinterpretation in~\cite{ST:2011} of the (global) energy function as the affine grading on a tensor 
product of KR crystals under Demazure arrows (see Definition~\ref{definition.demazure_arrows}).
In type $A$, KR crystals are perfect and hence, by the Kyoto path model~\cite{KMN1:1992, KMN2:1992},
can be realized as Demazure crystals. By the result of~\cite{ST:2011}, Demazure arrows change energy by 1. 
Together with the result that charge is 
well-behaved under crystal operators, this proves the equality of energy and charge.
For type $C$, we use the same approach, but in this case KR crystals are not perfect.
There is still an embedding of a tensor product of KR crystals into an affine highest weight crystal
(see Proposition~\ref{proposition.expansion_nonperfect}) by analogy with the Kyoto path model,
but now there are several highest weight components in the image, instead of just one. For each of these 
components we exhibit an explicit path from its highest weight (or ground state) to ``type $A$ elements'' in the 
component, using only  Demazure arrows (see Theorem~\ref{theorem.ground2typeA}). This additional result 
suffices to establish the equality of energy and charge in type $C$, based on the corresponding result in 
type $A$. As a by-product, we obtain an explicit description of the components that appear in the nonperfect 
setting of single columns for type $C$.

Our main result can now be stated as follows.

\begin{theorem} \label{theorem.main}
Let $B=B^{r_N,1} \otimes \cdots \otimes B^{r_1,1}$ be a tensor product of KR crystals in type 
$A_{n-1}^{(1)}$ or type $C_n^{(1)}$ with $r_N\ge r_{N-1}\ge \cdots \ge r_1 \ge 1$. Then for all $b\in B$ we have
\begin{equation}
	D(b) = - \charge(b),
\end{equation}
where $D(b)$ is given in Definition~{\rm \ref{definition.D_B}}, and $\charge(b)$ is defined in 
Section~{\rm \ref{section.chargerev}}.
\end{theorem}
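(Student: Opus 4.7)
The plan is to exploit the characterization of the global energy function $D$ established in \cite{ST:2011}: on a tensor product of KR crystals, $D$ is determined by vanishing on the ground state and decreasing by one along every Demazure arrow (in the sense of Definition~\ref{definition.demazure_arrows}). Consequently, to prove Theorem~\ref{theorem.main} it is enough to show that $\charge$ vanishes on the same ground state and increases by one along every Demazure arrow. The two functions then automatically agree as $D = -\charge$ throughout the portion of the crystal reached from the ground state by Demazure arrows.

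For type $A_{n-1}^{(1)}$, the single-column crystals $B^{r,1}$ are perfect, so the Kyoto path model \cite{KMN1:1992, KMN2:1992} realizes $B$ as an affine Demazure subcrystal of a single highest weight affine crystal, and every element is reached from the ground state by a finite sequence of Demazure arrows. The argument then reduces to two local claims about $\charge$: a direct combinatorial check that it vanishes on the ground state, and a verification that each Demazure arrow $f_i$ raises $\charge$ by exactly one. The latter is handled case-by-case, separating the classical arrows $f_i$ with $i \neq 0$, which keep the element in a fixed classical component, from the affine arrows coming from $f_0$, which act at the boundary of the underlying word.

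For type $C_n^{(1)}$, the KR crystals fail to be perfect and the Kyoto-style embedding of Proposition~\ref{proposition.expansion_nonperfect} lands in a direct sum of several highest weight affine components. Here the plan invokes Theorem~\ref{theorem.ground2typeA}: inside each such component, there is an explicit sequence of Demazure arrows from the ground state to a distinguished ``type $A$ element'' which may be regarded as a tensor product of type $A$ columns. At the type $A$ element the type $A$ case of Theorem~\ref{theorem.main} already supplies the desired equality $D = -\charge$; the Demazure induction then propagates it to the rest of the component, provided the behavior of $\charge$ along the initial ground-state-to-type-$A$ segment can be analyzed explicitly.

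The main obstacle I expect is the Demazure step for $\charge$. Because $\charge$ is extracted from an alcove walk via descents and co-heights in the quantum Bruhat graph, while the Demazure arrows are defined by the classical action of the crystal operators on KN columns, the net change of $\charge$ under $f_i$ requires a careful translation between the two combinatorial worlds, tracking how a single arrow alters the chain and the ensuing descent set. The type $C$ case compounds this difficulty, since the ground-state-to-type-$A$ paths of Theorem~\ref{theorem.ground2typeA} must be described concretely enough to carry the same calculation through, and the additional structure of the non-perfect Kyoto-style expansion must be controlled explicitly.
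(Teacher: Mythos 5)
Your plan follows the paper's own strategy almost exactly: use the result of \cite{ST:2011} identifying $D$ with the affine grading under Demazure arrows, handle type $A$ via perfectness and the Kyoto path model, and handle type $C$ by combining Proposition~\ref{proposition.expansion_nonperfect} with Theorem~\ref{theorem.ground2typeA} to reach a type $A$ element in each component and then invoking the type $A$ case. However, there are two concrete problems with the plan as written.

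First, your governing principle is mis-stated: it is not true that $D$ decreases (or that $\charge$ increases) by one along \emph{every} Demazure arrow. The Demazure arrows include all classical arrows $f_i$, $i\neq 0$, and along these both $D$ and $\charge$ must be \emph{constant}; only the affine arrows shift them by one (Lemma~\ref{rec-const-energy} versus Propositions~\ref{chae0} and~\ref{chce0}). The invariance of $\charge$ under $f_1,\dots,f_{n-1}$ (resp.\ $f_1,\dots,f_n$) is itself a substantive ingredient, proved in type $A$ via plactic invariance of the classical charge (Proposition~\ref{chaf1n}) and in type $C$ by appeal to the quantum alcove model of \cite{lalgam} (Proposition~\ref{chcf1n}); your write-up folds this into a claim that is literally false, and were it true the two statistics could not both equal the affine degree. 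Second, in type $C$ the step ``at the type $A$ element the type $A$ case already supplies $D=-\charge$'' hides a genuine gap: the type $A$ element lives in a type $C_n^{(1)}$ crystal, so its energy is a priori computed with the type $C$ combinatorial $R$-matrix and local energies. One must prove that for fillings with all entries in $[n]$ the type $A$ and type $C$ energies coincide; this is Lemma~\ref{localac} and Proposition~\ref{chc2} in the paper, established by realizing the $R$-matrix via symplectic jeu de taquin and computing the local energy on each classical component of $B^{h_1,1}\otimes B^{h_2,1}$. Without this comparison the reduction to type $A$ does not go through. With these two points repaired, your outline coincides with the paper's proof.
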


From a theoretical point of view, the above result is not surprising due to work of Ion~\cite{ionnmp}, which relates 
Macdonald polynomials at $t=0$ and affine Demazure characters in simply-laced types. However, this result 
does not work in type $C$; in addition, it only gives an equality of polynomials (the generating functions for 
the statistics and the weights), not of individual terms.

To compare our work with the previous papers on charge and the energy, let us first say that our results apply to arbitrary 
vertices in a tensor product of KN columns, not just to the highest weight elements (with respect to the nonzero arrows), 
that are used in the work involving Kostka--Foulkes polynomials. In type $A$, our approach via affine Demazure 
crystals comes closest to~\cite{kmopdc,Sh:2001,Sh:2002}. However, in addition to the aspect mentioned above, it  
differs from the approach in these papers because we do not use the cyclage operation, which is based 
on the corresponding plactic relations; see~\cite{leckfp}. These relations are the main cause of the complications in 
type $C$, in the work of Lecouvey~\cite{leckfp,lecccg}.

The paper is organized as follows. In Section~\ref{section.energy} we review the necessary crystal theory
and define the energy function. In Section~\ref{section.chargerev} we give the definition of charge both
in types $A$ and $C$. The proof of Theorem~\ref{theorem.main} for type $A$ using the method
of Demazure arrows is given in Section~\ref{section.energyA}. In Section~\ref{section.kyotoC} we classify
the various components under the Demazure arrows in type $C$ in the extension of the Kyoto path model,
and show that each ground state is connected to a type $A$ filling. These results are used in 
Section~\ref{section.energyC} to prove Theorem~\ref{theorem.main} for type $C$.
We conclude in Section~\ref{section.outlook} with a discussion of various possible extensions of this work.

\subsection*{Acknowledgements}
We would like to thank the Mathematisches Forschungsinstitut in Oberwolfach (MFO), Germany, for their support.
The breakthrough in this project occurred during Research in Pairs at MFO in February 2011.
Many of our results were found exploring with code on crystals in {\sc Sage}~\cite{sage,sagecombinat}.

We would also like to thank S.~Gaussent, M.~Kashiwara, C.~Lecouvey, P.~Littelmann, S.~Morier-Genoud,
M.~Okado, and P.~Tingley for helpful discussions.

\section{Crystals and energy function}
\label{section.energy}

In this section we review and set up crystal theory and define the energy function.
 
\subsection{Crystal generalities}
Crystal bases provide a combinatorial method to study representations of quantum algebras
$U_q(\g)$. For a good review on crystal base theory see the book by Hong and Kang~\cite{HK:2002}.
Here $\g$ is a Lie algebra or affine Kac--Moody Lie algebra with index set $I$, weight lattice $P$,
and simple roots $\alpha_i$ with $i\in I$. The set of dominant weights is denoted by $P_+$.
For affine Kac--Moody (resp. finite Lie) algebras we denoted the fundamental weights by $\Lambda_i$ 
(resp. $\omega_i$) for $i\in I$.

A $\g$-{\em crystal} is a nonempty set $B$ together with maps $e_i,f_i:B\to B\cup \{\emptyset\}$ for $i\in I$ and 
$\wt:B \to P$. For $b\in B$, we set $\varepsilon_i(b) = \max\{k \mid e_i^k(b) \neq \emptyset \}$,
$\varphi_i(b) = \max\{k \mid f_i^k(b) \neq \emptyset \}$,
\[
	\varepsilon(b) = \sum_{i\in I} \varepsilon_i(b) \Lambda_i \quad \text{and} \quad
	\varphi(b) = \sum_{i\in I} \varphi_i(b) \Lambda_i.
\]
The beauty about crystal theory is that it is well-behaved with respect to taking
tensor products. Let $B_1$ and $B_2$ be two $\g$-crystals. As a set $B_1\otimes B_2$ is the Cartesian
product of the two sets. For $b=b_1 \otimes b_2\in B_1 \otimes B_2$, the weight function is simply
$\wt(b) = \wt(b_1) + \wt(b_2)$. The crystal operators are given by
\begin{equation*}
	f_i (b_1 \otimes b_2)=
	\begin{cases}
	f_i  (b_1) \otimes b_2 & \text{if $\varepsilon _i(b_1) \geq \varphi_i(b_2)$,}\\
	b_1 \otimes f_i (b_2)  & \text{otherwise,}
	\end{cases}
\end{equation*}
and similarly for $e_i(b)$. This rule can also be expressed combinatorially by the signature rule.

A {\em highest weight crystal} $B(\lambda)$ of highest weight $\lambda\in P_+$ is a crystal with a unique element 
$u_\lambda$ such that $e_i(u_\lambda)=\emptyset$ for all $i\in I$ and $\wt(u_\lambda)=\lambda$.
On finite-dimensional highest weight crystals $B(\lambda)$ there exists an involution $S:B(\lambda) \to B(\lambda)$,
called the \textit{Lusztig involution}, which is a crystal isomorphism such that
\begin{equation*}
	S(f_i) = e_{i^*} \quad \text{and} \quad S(e_i) = f_{i^*}.
\end{equation*}
Here $i^*$ is defined through the map $\alpha_i \mapsto \alpha_{i^*} : = - w_0(\alpha_i)$ with
$w_0$ the longest element in the Weyl group of $\g$. Explicitly, we have $i^*=n-i$ for type $A_{n-1}$
and $i^* = i$ for type $C_n$. Under $S$ the highest weight element goes to the lowest weight element.

In~\cite{HK:2006}, Henriques and Kamnitzer defined a {\em crystal commutor} on the tensor product of
two classically highest weight crystals in terms of Lusztig's involution as
\begin{equation}
\label{equation.commutor}
\begin{split}
	B(\lambda) \otimes B(\mu) &\to B(\mu) \otimes B(\lambda)\\
	b_1 \otimes b_2 & \mapsto S(S(b_2) \otimes S(b_1)).
\end{split}
\end{equation}

\subsection{Kashiwara--Nakashima columns for type $C$}\label{sectkn} 
Kashiwara and Nakashima~\cite{kancgr} developed general tableaux models for finite-dimensional
highest weight crystals for all non-exceptional classical Lie algebras $\g$.
For type $C_n$, the Kashiwara--Nakashima (KN) columns \cite{kancgr} of height $k$ index the vertices 
of the fundamental representation $V(\omega_k)$ of the symplectic algebra $\mathfrak{sp}_{2n}({\mathbb C})$.
These columns are filled with entries in $[\overline{n}]:=\{1<2<\cdots<n<\overline{n}<\overline{n-1}<\cdots<\overline{1}\}$.

\begin{definition} \label{definition.KN_column}
A column-strict filling $b=b(1)\ldots b(k)$ with entries in $[\overline{n}]$ is a KN column if there
is no pair $(z,\overline{z})$ of letters in $b$ such that: 
\[
	z = b(p)\,, \qquad \overline{z} = b(q)\,,\qquad q-p\le k - z\,.
\]
\end{definition}

We will need a different definition of KN columns, which was proved to 
be equivalent to the one above in~\cite{shesjt}.

\begin{definition}\label{defkn}
Let $b$ be a column and $I=\{z_1 > \cdots > z_r\}$ the set of unbarred letters $z$ such that
the pair $(z,\overline{z})$ occurs in $b$. The column $b$ can be split when there exists a set
of $r$ unbarred letters $J=\{t_1 > \cdots > t_r\} \subset[{n}]$ such that:
\begin{itemize}
\item $t_1$ is the greatest letter in $[n]$ satisfying: $t_1 < z_1$, $t_1\not\in b$, and $\overline{t_1}\not\in b$,
\item for $i = 2, \ldots, r$, the letter $t_i$ is the greatest one in $[n]$ satisfying $t_i < \min(t_{i-1},z_i)$, $t_i\not\in b$, 
and $\overline{t_i}\not\in b$.
\end{itemize}
In this case we write:
\begin{itemize}
\item $b^R$ for the column obtained by changing  $\overline{z_i}$ into $\overline{t_i}$ in $b$ for each 
letter $z_i\in I$, and by reordering if necessary,
\item $b^L$ for the column obtained by changing $z_i$ into $t_i$ in $b$ for each letter $z_i\in I$, and by 
reordering if necessary.
\end{itemize}
The pair $b^L b^R$ will be called a split column.
\end{definition}

\begin{example}
{\rm
The following is a KN column of height $5$ in type $C_n$ for $n\ge 5$, together with the corresponding 
split column:
\[
	b=\tableau{{4}\\{5}\\{\overline{5}}\\{\overline{4}}\\{\overline{3}}}\,,\;\;\;\;\;
	b^L b^R=\tableau{{1}&{4}\\{2}&{5}\\{\overline{5}}&{\overline{3}}\\{\overline{4}}&{\overline{2}}\\
	{\overline{3}}&{\overline{1}}}\,.
\]
We used the fact that $\{z_1> z_2\}=\{5>4\}$, so $\{t_1>t_2\}=\{2>1\}$. 
}
\end{example}

We will consider Definition~\ref{defkn} as the definition of KN columns. 
 
\subsection{Kirillov--Reshetikhin crystals}
For the definition of the crystal energy function, we need to endow the KN columns with an affine
crystal structure. These finite-dimensional affine crystals are called {\em Kirillov--Reshetikhin (KR) crystals}.
Combinatorial models for all non-exceptional types were provided in~\cite{FOS:2009}.

Here we only describe the KR crystals $B^{r,1}$ for types $A_{n-1}^{(1)}$ and $C_n^{(1)}$, where
$r\in \{1,2,\ldots,n-1\}$ and $r\in \{1,2,\ldots,n\}$, respectively. As a classical type $A_{n-1}$ (resp. $C_n$) crystal, 
the KR crystal is isomorphic to
\[
	B^{r,1} \cong B(\omega_r).
\]

The crystal operator $f_0$ is given as follows. Let $b\in B^{k,1}$,
represented by a one-column KN tableau. In type $A_{n-1}$, if $b$ contains the letter $n$ and no $1$,
$f_0(b)$ is the obtained from $b$ by removing $n$ and adding $1$ to the column, leaving all letters
in strictly increasing order. Otherwise $f_0(b) = \emptyset$. In type $C_n$, if $b$ contains the letter $\overline{1}$,
then $f_0(b)$ is obtained from $b$ by removing the $\overline{1}$ and adding the letter $1$, arranging
all letters again in strictly increasing order. Otherwise $f_0(b) = \emptyset$. Note that if $b$ contains 
$\overline{1}$, then it cannot contain $1$ by the KN condition of Definition~\ref{definition.KN_column}.

Similarly, in type $A_{n-1}$, $e_0(b)$ changes a $1$ into $n$ if $1$ is in $b$ and $n$ is not, and otherwise 
$e_0(b)= \emptyset$. In type $C_n$, $e_0(b)$ is obtained from $b$ by changing a $1$ into a $\overline{1}$
if it exists, and $e_0(b) = \emptyset$ otherwise.

\subsection{The $D$ function}
 
Now we come to the definition of the energy function $D$ on tensor products of KR crystals $B^{r,1}$
of type $A_{n-1}^{(1)}$ or $C_n^{(1)}$.
It is defined by summing up combinatorially defined ``local'' energy contributions using the combinatorial
$R$-matrix.

Let $B_1$, $B_2$ be two affine crystals with generators $v_1$ and $v_2$, respectively, such 
that $B_1\otimes B_2$ is connected and $v_1\otimes v_2$ lies in a one-dimensional weight space. 
By~\cite[Proposition 3.8]{LOS:2010}, this holds for any two 
KR crystals. The generator $v_{r,s}$ for the KR crystal $B^{r,s}$ is the unique element of classical weight $s\omega_r$.

The \textit{combinatorial $R$-matrix}~\cite[Section 4]{KMN1:1992} is the unique 
crystal isomorphism
\begin{equation*}
    \sigma : B_2 \otimes B_1 \to B_1 \otimes B_2.
\end{equation*}
By weight considerations, this must satisfy $\sigma(v_2 \otimes v_1) = v_1 \otimes v_2$.
 
As in~\cite{KMN1:1992} and~\cite[Theorem 2.4]{OSS:2003}, there is a function 
$H=H_{B_2,B_1}:B_2\otimes B_1\rightarrow\Z$, unique up to a global additive constant, such that, 
for all $b_2\in B_2$ and $b_1\in B_1$,
\begin{equation} \label{eq:local energy}
  H(e_i(b_2\otimes b_1))=
  H(b_2\otimes b_1)+
  \begin{cases}
    -1 & \text{if $i=0$ and LL,}\\
    1 & \text{if $i=0$ and RR,}\\
    0 & \text{otherwise.}
  \end{cases}
\end{equation}
Here LL (resp. RR) indicates that $e_0$ acts on the left (resp. right) tensor factor in both
$b_2\otimes b_1$ and $\sigma (b_2 \otimes b_1)$. When $B_1$ and $B_2$ are KR crystals, we normalize 
$H_{B_2, B_1}$ by requiring $H_{B_2, B_1}(v_2 \otimes v_1)= 0$, where $v_1$ and $v_2$ are the generators
defined above. The map $H$ is called the {\em local energy function}.
 
\begin{definition} \label{definition.D_B}
For $B=B^{r_N,1} \otimes \cdots \otimes B^{r_1,1}$ of type $A_{n-1}^{(1)}$ or $C_n^{(1)}$, set
\begin{equation*}
        H^R_{j,i}:= H_{i} \sigma_{i+1} \sigma_{i+2} \cdots \sigma_{j-1} \quad \text{and} \quad
        H^L_{j,i}:= H_{j-1} \sigma_{j-2} \sigma_{j-3} \cdots \sigma_i,
\end{equation*}
where $\sigma_j$ and $H_j$ act on the $j$-th and $(j+1)$-st tensor factors.
We define a right and left energy function $D^R_B, D_B^L : B \to \Z$ as
\begin{equation} \label{equation.D_B}
        D^R_B := \sum_{N \geq j > i \geq 1} H^R_{j,i} \quad \text{and} \quad
        D^L_B := \sum_{N \geq j > i \geq 1} H^L_{j,i}.
\end{equation}
We set $D_B:= D_B^L$ and, when there is no confusion, we shorten $D_B$ to simply $D$; this is referred to as the energy function. 
\end{definition}

\begin{remark}
Note that the energies $D_B^R$ and $D_B^L$ can be defined for general tensor products
of KR crystals. When the KR crystals decompose into several components as classical crystals
(unlike in the cases for type $A_{n-1}^{(1)}$ and $C_n^{(1)}$ we consider), there is an extra term in the
analogue of~\eqref{equation.D_B}; see~\cite{OSS:2003}.
\end{remark}

There is a precise relationship between $D^R$ and $D^L$ using the Lusztig involution.
To state it, let us introduce the following map
\begin{equation*}
\begin{split}
	\tau : \quad  B_N \otimes \cdots \otimes B_1 &\to B_1 \otimes \cdots \otimes B_N\\
	         b_N \otimes \cdots \otimes b_1 &\mapsto S(b_1) \otimes \cdots \otimes S(b_N).
\end{split}
\end{equation*}

For types $A_{n-1}^{(1)}$ and $C_n^{(1)}$ and $B_i=B^{r_i,1}$, the KR crystal $B_i$ is connected as a classical 
crystal and under $S$ the classically highest weight element $u_i^{\mathrm{highest}}$ maps to the lowest weight element
$u_i^{\mathrm{lowest}}$. It is not hard to show from the explicit description of $S$, $e_0$ and $f_0$ in this case, 
that the following diagrams commute:
\begin{equation} \label{equation.commutative_tau}
   \raisebox{0.7cm}{
   \xymatrix{
     B^{r,1} \ar[r]^{f_0} \ar[d]^S & B^{r,1} \ar[d]^S \\
     B^{r,1} \ar[r]_{e_0} & B^{r,1}
   }}
   \qquad \text{and} \qquad
   \raisebox{0.7cm}{
   \xymatrix{
     B \ar[r]^{f_0} \ar[d]^\tau & B \ar[d]^\tau \\
     \widetilde{B} \ar[r]_{e_0} & \widetilde{B}
   }}
\end{equation}
where $B= B^{r_N,1} \otimes \cdots \otimes B^{r_1,1}$ and $\widetilde{B} = B^{r_1,1} \otimes \cdots \otimes B^{r_N,1}$.

This shows in particular that the crystal commutor~\eqref{equation.commutor} is lifted to an
affine crystal isomorphism in these cases and hence must coincide with the combinatorial $R$-matrix $\sigma$.

\begin{proposition} \label{proposition.Dtau}
Let $B=B^{r_N,1} \otimes \cdots \otimes B^{r_1,1}$ of type $A_{n-1}^{(1)}$ or $C_n^{(1)}$
and $b\in B$. Then
\begin{equation} \label{equation.DS}
	D_B^R(b) = D_B^L(\tau(b)).
\end{equation}
\end{proposition}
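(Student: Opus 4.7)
The plan is to reduce the global identity to a pair-by-pair equality: for each pair $N\ge j>i\ge 1$, I will show
\[
H^R_{j,i}(b)=H^L_{N+1-i,\,N+1-j}(\tau(b)).
\]
Since $(j,i)\mapsto(N+1-i,N+1-j)$ is an involutive bijection on the relevant index set, summing over all pairs yields~\eqref{equation.DS}. The pair-by-pair identity then splits into two ingredients.

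The first ingredient is an equivariance of $\tau$ with respect to the $R$-matrices, namely $\tau\circ\sigma_k=\sigma_{N-k}\circ\tau$ on $B$. Since the $R$-matrix coincides with the crystal commutor (as noted just after~\eqref{equation.commutative_tau}), one has $\sigma(b_2\otimes b_1)=S_{B_1\otimes B_2}(S(b_1)\otimes S(b_2))$; combining this with the fact that the commutor is involutive yields the formula
\[
S_{B_2\otimes B_1}(b_2\otimes b_1)=S(y)\otimes S(x),\quad\text{where }\sigma(b_2\otimes b_1)=x\otimes y,
\]
for the Lusztig involution on a two-factor tensor. A direct computation using this formula then gives the equivariance, and iterating produces
\[
\tau\bigl(\sigma_{i+1}\cdots\sigma_{j-1}(b)\bigr)=\sigma_{N-i-1}\cdots\sigma_{N+1-j}\bigl(\tau(b)\bigr),
\]
converting the chain of $\sigma$'s in $H^R_{j,i}$ into the one in $H^L_{N+1-i,\,N+1-j}$.

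The second ingredient is the base case $H_{B_2,B_1}(b_2\otimes b_1)=H_{B_1,B_2}(S(b_1)\otimes S(b_2))$ on a single pair. Regarded as two functions on $B_2\otimes B_1$, both sides satisfy~\eqref{eq:local energy}: using the diagrams in~\eqref{equation.commutative_tau} together with the tensor formula for $S$ above, one checks that $\tau$ transports an $e_0$-transition with LL (respectively RR) pattern in $B_2\otimes B_1$ to an $f_0$-transition with RR (respectively LL) pattern in $B_1\otimes B_2$; by~\eqref{eq:local energy} the $\pm 1$ increments then match. The two sides therefore agree up to an additive constant, and this constant is pinned down at the generator $v_2\otimes v_1$: the left side equals $0$ by the paper's normalization, while $S(v_1)\otimes S(v_2)=S_{B_1\otimes B_2}(v_1\otimes v_2)$ is the lowest weight of the top component of $B_1\otimes B_2$, where the local energy also vanishes.

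Combining the two ingredients, I push $\tau$ through the chain of $R$-matrices via the equivariance and then apply the base case to the final pair, obtaining the pair-by-pair identity and hence the proposition by summation. The hardest step I anticipate is the final normalization claim in the base case, namely that $H_{B_1,B_2}(S(v_1)\otimes S(v_2))=0$; this requires exhibiting a specific path of $e_0$, $f_0$, and classical arrows from $v_1\otimes v_2$ to $S(v_1)\otimes S(v_2)$ whose net $H$-increments cancel, and it draws on the explicit description of $f_0$ on KN columns for types $A$ and $C$.
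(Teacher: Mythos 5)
Your proposal is correct and follows essentially the same route as the paper: intertwining $\tau$ with the $R$-matrices via the identification of $\sigma$ with the commutor, reducing to the local identity $H(b_2\otimes b_1)=H(S(b_1)\otimes S(b_2))$, and proving that by matching LL/RR patterns in the recursion~\eqref{eq:local energy} together with normalization at the generators. The only remark worth making is that the final normalization is easier than you anticipate: since $v_1\otimes v_2$ and $S(v_1)\otimes S(v_2)$ are the highest and lowest weight elements of the \emph{same} classical component and $H$ is constant on classical components, no path involving $e_0$ or $f_0$ is needed.
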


\begin{proof}
Note that if $\sigma(b_1 \otimes b_2) = b_2' \otimes b_1'$, then $\sigma(S(b_2) \otimes S(b_1)) = 
S(b_1') \otimes S(b_2')$ since $\sigma$ and the commutor~\eqref{equation.commutor} agree on two tensor factors
under the conditions of the proposition by the above arguments.
Hence the terms in the sum of $D^R$ are in one-to-one correspondence with terms in $D^L$. Therefore it suffices to 
show that the local energy satisfies
\begin{equation} \label{equation.HS}
	H(b_2 \otimes b_1) = H(S(b_1) \otimes S(b_2))
\end{equation}
for $b_2 \otimes b_1 \in B^{r_2,1} \otimes B^{r_1,1}$.
Since $S(u_1^{\mathrm{highest}}) \otimes S(u_2^{\mathrm{highest}})
= u_1^{\mathrm{lowest}} \otimes u_2^{\mathrm{lowest}}$ lies in the same component as 
$u_1^{\mathrm{highest}} \otimes u_2^{\mathrm{highest}}$, we have
\[
	H(u_2^{\mathrm{highest}} \otimes u_1^{\mathrm{highest}}) 
	=H(S(u_1^{\mathrm{highest}}) \otimes S(u_2^{\mathrm{highest}})) = 0.
\]
In the recursion~\eqref{eq:local energy}, if for example $e_0$ acts LL on $b_2\otimes b_1$,
then $f_0$ acts RR on $S(b_1) \otimes S(b_2) = S(b_1) \otimes S(b_2)$ by~\eqref{equation.commutative_tau}.
Hence $H$ changes by the same amount in the left and right hand side of~\eqref{equation.HS}. 
The other cases can be checked analogously, which proves~\eqref{equation.HS}.
\end{proof}

\subsection{$D$ energy as affine grading}

As suggested in~\cite[Section 2.5]{OSS:2002} and proven in~\cite{ST:2011},
the energy $D^R$ is the same as the affine degree grading in the associated highest weight affine crystals
up to an overall shift. We will explain this now since it plays a crucial role in the proof of the equality between
charge and energy. 

We begin with the definition of Demazure arrows. For this we need constants $c_r$ for $r\in I\setminus \{0\}$
as for example defined in~\cite{FOS:2010}. In the cases of concern to us here, we have $c_r=1$ for all $r$
in type $A_{n-1}^{(1)}$ and $c_r=2$ for $1\le r<n$ and $c_n=1$ in type $C_n^{(1)}$.

\begin{definition} \label{definition.demazure_arrows}
Let $B= B^{r_N, s_N} \otimes \cdots \otimes  B^{r_1, s_1}$ be a tensor product of KR crystals and fix an 
integer $\ell$ such that $\ell \geq \lceil s_k/c_k \rceil$ for all $1 \leq k \leq N$. We call such a tensor 
product a \textit{composite KR crystal of level bounded by $\ell$}.

An arrow $f_i$ is called an \textit{$\ell$-Demazure arrow} on $b\in B$ if $\varphi_i(b)>0$ and either 
$i\in I\setminus \{0\}$ or $i=0$ and $\varepsilon_0(b)\ge \ell$. 
\end{definition}

In the setting of this paper, we are only concerned with tensor products of types $A_{n-1}^{(1)}$ and 
$C_n^{(1)}$ of the form $B= B^{r_N,1} \otimes \cdots \otimes B^{r_1,1}$. In this case one can pick $\ell=1$
and a \textit{Demazure arrow} for $B$ is a $1$-Demazure arrow.

\begin{lemma}\label{rec-const-energy}
Let $B= B^{r_N, 1} \otimes \cdots \otimes  B^{r_1, 1}$ of type $A_{n-1}^{(1)}$ or $C_n^{(1)}$ and $b\in B$. Then
\begin{enumerate}
\item \label{i.R}$\varepsilon_0(b) \ge 1$ implies $D^R(f_0(b)) = D^R(b)+1$;
\item \label{i.L} $\varphi_0(b) \ge 1$ implies $D^L(e_0(b)) = D^L(b)+1$.
\end{enumerate}
\end{lemma}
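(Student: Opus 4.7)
My strategy is to analyze $D^R(f_0 b) - D^R(b)$ term-by-term using the definition
$D^R = \sum_{N\ge j>i\ge 1} H^R_{j,i}$ with $H^R_{j,i} = H_i\circ c_{j,i}$, where
$c_{j,i} := \sigma_{i+1}\cdots\sigma_{j-1}$. Two ingredients are crucial. First, each combinatorial $R$-matrix $\sigma_k$ is an affine crystal isomorphism (as noted in the discussion following Proposition~\ref{proposition.Dtau}), so that $\sigma_k$ and hence $c_{j,i}$ commute with both $e_0$ and $f_0$. Second, the local-energy recursion~\eqref{eq:local energy} specifies how $H$ changes under $e_0$ (and, by inversion, under $f_0$) on a two-factor product.

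For part~(i.R), I would apply $f_0$ to $b$ (the implicit condition $\varphi_0(b)\ge 1$ making $f_0 b$ defined), commute it past $c_{j,i}$, and compute
\begin{equation*}
D^R(f_0 b) - D^R(b) \;=\; \sum_{N\ge j>i\ge 1}\bigl(H(f_0(c_{j,i}b)) - H(c_{j,i}b)\bigr).
\end{equation*}
By the $f_0$-version of~\eqref{eq:local energy}, each summand equals $+1$ if $f_0$ acts LL on the two-factor slice at positions $i, i{+}1$ of $c_{j,i}(b)$, $-1$ if it acts RR, and $0$ otherwise. The combinatorial core of the argument is to trace, via the signature rule, how the ``active position'' of $f_0$ shifts as one varies $(j,i)$, and to verify that all LL and RR contributions cancel pairwise across neighboring pairs except for a single surviving $+1$. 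This net $+1$ is exactly what identifies $D^R$ with the affine degree of the Kyoto-path embedding of~\cite{ST:2011}, under which a Demazure arrow $f_0$ lowers $K = \Lambda_0 - d$ by one and raises $d$ by one. The assumption $\varepsilon_0(b)\ge 1$, combined with the implicit $\varphi_0(b)\ge 1$, is precisely the $\ell = 1$ Demazure condition of Definition~\ref{definition.demazure_arrows}, which is what guarantees that the propagation of $f_0$ through the $\sigma_k$'s stays in ``interior'' positions needed for the cancellation.

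Part~(i.L) I would deduce from (i.R) by Lusztig duality. Using Proposition~\ref{proposition.Dtau} together with the right-hand diagram in~\eqref{equation.commutative_tau}, one has
\begin{equation*}
D^L(e_0 b) \;=\; D^R(\tau^{-1}(e_0 b)) \;=\; D^R(f_0(\tau^{-1}b)).
\end{equation*}
Since $\tau$ is built from the Lusztig involution which exchanges $e_i$ and $f_{i^*}$, the hypothesis $\varphi_0(b)\ge 1$ translates to $\varepsilon_0(\tau^{-1} b)\ge 1$, so (i.R) applies to $\tau^{-1} b$ and yields $D^L(e_0 b) - D^L(b) = 1$.

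\textbf{Main obstacle.} The nontrivial step is the telescoping signature analysis in part~(i.R): verifying that the LL/RR contributions coming from successive pairs $(j,i)$ cancel correctly to leave exactly one net $+1$. This requires detailed bookkeeping of how the position at which $f_0$ acts changes as $f_0$ is transported through the chain of $R$-matrices, and is the content of the identification of the energy function with an affine grading carried out in~\cite{ST:2011}; once this identification is in hand, the lemma becomes a transparent statement about Demazure arrows shifting the $d$-grading by one.
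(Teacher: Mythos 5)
Your proposal takes essentially the same route as the paper: part (1) rests on the Demazure-arrow/affine-grading result of Schilling--Tingley (the paper simply cites \cite{ST:2011}, Lemma 7.3, which is exactly the ``main obstacle'' you identify and defer to), and part (2) is deduced from part (1) via the map $\tau$, the commutative diagrams in \eqref{equation.commutative_tau}, and Proposition~\ref{proposition.Dtau}, precisely as in the paper. Your sketched LL/RR telescoping argument for part (1) is not actually carried out, but since you explicitly fall back on \cite{ST:2011} for that step, your proof is no less complete than the paper's own.
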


\begin{proof}
Part~\eqref{i.R} follows directly from~\cite[Lemma 7.3]{ST:2011}. For part~\eqref{i.L}, recall that 
by~\eqref{equation.commutative_tau} and Proposition~\ref{proposition.Dtau} we have
$e_0(\tau(b)) = \tau(f_0(b))$ and $D^L(\tau(b)) = D^R(b)$. Also, setting $\tilde{b}=\tau(b)$, we have
$\varphi_0(\tilde{b})\ge 1$ if $\varepsilon_0(b)\ge 1$; thus, by using part~\eqref{i.R}, we deduce
\begin{equation*}
	D^L(e_0(\tilde{b})) = D^L(e_0(\tau(b))) = D^L(\tau(f_0(b))) = D^R(f_0(b))
	= D^R(b)+1 = D^L(\tilde{b})+1\,,
\end{equation*}
which proves the claim.
\end{proof}

The proof of the following essentially appeared in~\cite[Proof of Theorem 4.4.1]{KMN1:1992} and 
was spelled out in this precise form in~\cite[Proposition 8.1]{ST:2011}. Here 
\begin{equation*}
	P_\ell^+ = \{ \lambda \in P^+ \mid \lev(\lambda) = \ell \},
\end{equation*}
where $\lev(\lambda):=\lambda(c)$ is the level of $\lambda$ and $c$ is the central element
$c=\sum_{i\in I} a_i^\vee \alpha_i^\vee$.

\begin{proposition} \label{proposition.expansion_nonperfect}
For $B$ a composite KR crystal of level bounded by $\ell$,
\begin{equation} \label{equation.expansion_nonperfect}
	B \otimes B(\ell \Lambda_0) \cong \bigoplus_{\Lambda'} B(\Lambda'),
\end{equation}
where the sum is over a finite collection of (not necessarily distinct) $\Lambda' \in P^+_\ell$. 
\end{proposition}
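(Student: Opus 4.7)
The plan is to follow the classical argument of Kashiwara--Miwa--Nakashima for decomposing the tensor product of a finite crystal with an integrable highest weight crystal, as carried out in \cite[Proof of Theorem 4.4.1]{KMN1:1992} and reproduced in \cite[Proposition 8.1]{ST:2011}. The argument has three main ingredients: a central-charge computation, the existence of highest weight elements via raising operators, and Kashiwara's uniqueness theorem for integrable highest weight crystals.

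First I would observe that $B \otimes B(\ell \Lambda_0)$ is integrable, being a tensor product of integrable crystals. Every element of the classical crystal $B$ has central charge zero, since $\wt(B)$ lies in the classical weight lattice (no $\Lambda_0$-component), while $B(\ell \Lambda_0)$ has central charge $\ell$. By additivity of central charge under tensor products, every element of $B \otimes B(\ell \Lambda_0)$ has central charge $\ell$, so any highest weight appearing in the decomposition must lie in $P^+_\ell$.

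Next I would show that every connected component contains a highest weight element. Given any $x \in B \otimes B(\ell \Lambda_0)$, its weight is of the form $\wt(b) + \ell\Lambda_0 - \sum_i c_i \alpha_i$ for some $b \in B$ and $c_i \in \Z_{\geq 0}$; since $B$ is finite, these weights are bounded above in the partial order where $\mu \leq \mu'$ iff $\mu' - \mu \in \sum_i \Z_{\geq 0}\, \alpha_i$. Because each raising operator $e_i$ strictly increases the weight by $\alpha_i$, iterating $e_i$'s starting from $x$ must terminate at some $\tilde x$ with $e_i(\tilde x) = \emptyset$ for all $i \in I$. Its weight $\Lambda' := \wt(\tilde x)$ is dominant by integrability and of level $\ell$ by the central-charge computation, hence $\Lambda' \in P^+_\ell$. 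By Kashiwara's uniqueness theorem for integrable highest weight crystals, the connected component of $\tilde x$ is isomorphic to $B(\Lambda')$.

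Finally, finiteness of the direct sum follows from two observations: $P^+_\ell$ is itself finite because $\Lambda' = \sum_i n_i \Lambda_i \in P^+_\ell$ is determined by a tuple $(n_i) \in \Z_{\geq 0}^{|I|}$ with $\sum_i n_i a_i^\vee = \ell$, of which there are only finitely many; and for each $\Lambda' \in P^+_\ell$, the multiplicity of $B(\Lambda')$ equals the number of highest weight elements of weight $\Lambda'$ in $B \otimes B(\ell \Lambda_0)$, which is finite since $B$ is finite and the weight spaces of $B(\ell \Lambda_0)$ are finite-dimensional. The main obstacle is the termination argument for the raising process, which rests on the boundedness-above of weights in the tensor product; aside from this, the argument is a straightforward assembly of standard affine-crystal facts.
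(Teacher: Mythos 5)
The paper does not actually prove this statement; it quotes it from \cite[Proof of Theorem 4.4.1]{KMN1:1992} and \cite[Proposition 8.1]{ST:2011}, so your proposal can only be measured against the standard argument in those references. Your overall strategy (level computation, existence of highest weight elements in each component, Kashiwara's uniqueness theorem, finiteness count) is indeed the right skeleton, and the level computation and the finiteness argument are fine. However, the step you yourself single out as ``the main obstacle'' --- termination of the raising process --- is not correctly justified, and this is a genuine gap rather than a presentational one.

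The problem is which weight lattice you are working in. A KR crystal $B$ is a $U_q'(\g)$-crystal: its weights live in the classical (level-zero) quotient of the affine weight lattice, where the simple roots satisfy $\sum_i a_i\alpha_i=\delta\equiv 0$. In that lattice the relation ``$\mu\le\mu'$ iff $\mu'-\mu\in\sum_i\Z_{\ge0}\alpha_i$'' is not a partial order (every element is $\le$ itself via a nontrivial positive combination of simple roots), so ``bounded above'' plus ``each $e_i$ raises the weight by $\alpha_i$'' does not force termination. Your argument, applied verbatim to $B$ alone, would prove that every finite affine crystal has a highest weight element, which is false: in $B^{1,1}$ of type $A_1^{(1)}$ one applies $e_0,e_1,e_0,e_1,\dots$ forever, each step strictly raising the weight by a simple root, with only two weights present. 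If instead you try to work in the full affine weight lattice (where the $\alpha_i$ are independent and your order-theoretic argument would be valid), then $\wt(b)$ for $b\in B$ is only defined up to $\Z\delta$; choosing a lift, an application of $e_0$ to the \emph{first} tensor factor changes the total lifted weight by $\alpha_0-\delta$, not $\alpha_0$, so the hypothesis ``each raising operator increases the weight by $\alpha_i$'' fails exactly for the operator that matters. The actual content of the cited proofs is to show that every component of $B\otimes B(\ell\Lambda_0)$ contains an element of the form $b'\otimes u_{\ell\Lambda_0}$ (this uses the structure of the second factor and, at the last stage, the level bound $\varepsilon_0(b')\le\ell$ guaranteed by the hypothesis on $B$, which is invisible in your argument); one cannot get this from weight bounds alone. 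You need to either reproduce that induction on the depth of the $B(\ell\Lambda_0)$-component, or invoke the references for this step.
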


In Section~\ref{section.kyotoC}, we discuss in more detail the sum on the right hand side of~\eqref{equation.expansion_nonperfect} for type $C_n^{(1)}$ and $\ell=1$.

\begin{definition} 
For each $b \in B$, let $u_b^{\ell \Lambda_0}$ be the unique element of $B$ 
such that $u^{\ell\Lambda_0}_b \otimes u_{\ell \Lambda_0}$ is the highest weight in the component from 
Proposition~{\rm \ref{proposition.expansion_nonperfect}} containing $b \otimes u_{\ell \Lambda_0}$. 
\end{definition}

Define the function $\deg$ on a direct sum of highest weight crystals to be the basic grading on each component, 
with all highest weight elements placed in degree $0$. 

\begin{corollary} \label{corollary.D=degree}
Choose an isomorphism $m: B \otimes B( \ell \Lambda_0) \cong \bigoplus_{\Lambda'} B(\Lambda')$. 
Then for all $b \in B$, we have $D(b)-D(u_b^{\ell \Lambda_0})= \deg(m(b \otimes u_{ \ell \Lambda_0}))$.
\end{corollary}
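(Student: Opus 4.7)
The plan is to connect $b \otimes u_{\ell\Lambda_0}$ to the affine highest weight $u_b^{\ell\Lambda_0} \otimes u_{\ell\Lambda_0}$ of its component by a sequence of crystal operators, and to track the change in both $D = D^L$ and $\deg$ along this path.

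First, I observe that $e_i(u_{\ell\Lambda_0}) = \emptyset$ for every $i \in I$, so the tensor-product signature rule forces any $e_i$ applied to $b' \otimes u_{\ell\Lambda_0}$ to act on the left factor whenever the result is non-empty. Consequently, an ascending path from $b \otimes u_{\ell\Lambda_0}$ to the highest weight stays in the image of the embedding $b \mapsto b \otimes u_{\ell\Lambda_0}$, and can be encoded as a sequence $b = b_0, b_1, \ldots, b_m = u_b^{\ell\Lambda_0}$ in $B$ with $b_j = e_{i_j}(b_{j-1})$. Along this path, classical $e_i$-steps ($i \neq 0$) leave $\deg$ unchanged, while each $e_0$-step decreases $\deg$ by $1$, so the number of $e_0$-steps is exactly $d := \deg(m(b \otimes u_{\ell\Lambda_0}))$.

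Next I would track $D^L$ along the same path. Classical steps preserve $D^L$ because both the local energy $H$ and the combinatorial $R$-matrices $\sigma$ making up $D^L$ via $H^L_{j,i} = H_{j-1} \sigma_{j-2} \cdots \sigma_i$ are classical crystal invariants (the former by the recursion~\eqref{eq:local energy}, the latter because $\sigma$ is a classical crystal isomorphism). For each $e_0$-step, Lemma~\ref{rec-const-energy} together with Proposition~\ref{proposition.Dtau} (used to move between $D^L$ and $D^R$ via the Lusztig involution $\tau$) contributes a change of $-1$ to $D^L$. Summing the step-by-step contributions gives $D^L(u_b^{\ell\Lambda_0}) - D^L(b) = -d$, which is equivalent to the identity in the statement.

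The main obstacle is the bookkeeping for $D^L$ at each $e_0$-step. Lemma~\ref{rec-const-energy}~\eqref{i.L} applies under the hypothesis $\varphi_0(b') \ge 1$, while the in-image $e_0$-ascent condition is $\varepsilon_0(b') > \ell$; these are not equivalent. The workaround is to invoke part~\eqref{i.R} of the same lemma, whose hypothesis $\varepsilon_0 \ge 1$ is exactly the $\ell = 1$ Demazure condition on the reverse $f_0$-arrow in $B$, and to transfer its conclusion about $D^R$ to one about $D^L$ using Proposition~\ref{proposition.Dtau}. The fact that the image of the embedding is reached from the affine highest weight by Demazure arrows (a subcrystal-type statement in the spirit of~\cite{ST:2011}) then ensures that this transfer is uniform along the climb.
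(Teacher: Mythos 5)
Your overall strategy --- climb from $b \otimes u_{\ell \Lambda_0}$ to the highest weight of its component, observe that every raising operator must act on the left tensor factor, and match the change in $D$ against the change in $\deg$ step by step --- is the intended argument, and you have correctly isolated the one genuinely delicate point: the $e_0$-steps of the climb are governed by the condition $\varepsilon_0 > \ell$ on the left factor, which (after reversing the arrow) is the hypothesis of Lemma~\ref{rec-const-energy}~\eqref{i.R} about $D^R$, and not the hypothesis $\varphi_0 \ge 1$ of Lemma~\ref{rec-const-energy}~\eqref{i.L} about $D^L$. What your argument proves, cleanly and with no further input, is the identity $D^R(b) - D^R(u_b^{\ell\Lambda_0}) = \deg(m(b \otimes u_{\ell\Lambda_0}))$.

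The proposed ``transfer'' from $D^R$ to $D^L$ via Proposition~\ref{proposition.Dtau} does not close the remaining gap. That proposition equates $D^R_B(b)$ with $D^L(\tau(b))$, where $\tau(b)$ lives in the \emph{reversed} tensor product $\widetilde{B}$; it converts a statement about how $D^R$ changes along an $f_0$-arrow of $B$ into a statement about how $D^L$ changes along the corresponding $e_0$-arrow of $\widetilde{B}$. Unwinding this (using $\varepsilon_0(\tau(x)) = \varphi_0(x)$) you recover exactly Lemma~\ref{rec-const-energy}~\eqref{i.L}, with its $\varphi_0 \ge 1$ hypothesis intact --- indeed this is precisely how the paper derives part~\eqref{i.L} from part~\eqref{i.R} --- so the transfer is circular and you are back where you started. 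Worse, the statement you would need, namely that $D^L$ drops by exactly $1$ at each $e_0$-step of the climb, is false: in $B = B^{1,1} \otimes B^{1,1} \otimes B^{1,1}$ of type $A_1^{(1)}$ one has $\sigma = \mathrm{id}$, hence $D^L = H(b_2 \otimes b_1) + 2H(b_3 \otimes b_2)$ while $D^R = 2H(b_2 \otimes b_1) + H(b_3 \otimes b_2)$, and the climb step $e_0(1 \otimes 1 \otimes 1) = 2 \otimes 1 \otimes 1$ (which has $\varepsilon_0 = 3 > \ell = 1$) changes $D^L$ by $-2$ but $D^R$ by $-1$. So the corollary is intrinsically a statement about $D^R$ (as in \cite{ST:2011}, from which it is imported), the discrepancy $D^L - D^R$ is not constant on the components involved, and no local bookkeeping along the climb will convert your correct $D^R$-argument into one for $D^L$; resolving this requires confronting the left/right convention mismatch directly rather than patching this step.
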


\begin{corollary} \label{corollary.D=degree1}
The minimal number of $e_0$ in a string of $e_i$ taking $b$ to $u_b^{\ell \Lambda_0}$ is 
$D(b)-D(u_b^{\ell \Lambda_0})$. 
\end{corollary}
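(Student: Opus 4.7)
The plan is to deduce the corollary from Corollary~\ref{corollary.D=degree} and Lemma~\ref{rec-const-energy}(\ref{i.L}) by translating $e_i$-strings in the tensor product $B\otimes B(\ell\Lambda_0)$ into $e_i$-strings in $B$.

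For the realization I would use Proposition~\ref{proposition.expansion_nonperfect}: $m(b\otimes u_{\ell\Lambda_0})$ lies in a highest weight component $B(\Lambda')$ whose highest weight is $u_b^{\ell\Lambda_0}\otimes u_{\ell\Lambda_0}$, so one can pick an $e_i$-string inside $B(\Lambda')$ from $b\otimes u_{\ell\Lambda_0}$ to this highest weight. Since $u_{\ell\Lambda_0}$ is the highest weight of $B(\ell\Lambda_0)$ we have $\varepsilon_i(u_{\ell\Lambda_0})=0$ for every $i$, and the tensor-product signature rule then forces every $e_i$ applicable to an element of the form $c\otimes u_{\ell\Lambda_0}$ to act on the left tensor factor. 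By induction the right factor remains $u_{\ell\Lambda_0}$ throughout, so the string descends to an $e_i$-string in $B$ from $b$ to $u_b^{\ell\Lambda_0}$ using the same number of $e_0$'s. In any affine highest weight crystal the number of $e_0$'s in an $e_i$-string to the highest weight is just the affine grading of the starting vertex, and by Corollary~\ref{corollary.D=degree} this grading equals $D(b)-D(u_b^{\ell\Lambda_0})$. This exhibits a $B$-string realizing the asserted count, giving one of the two needed inequalities.

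For the matching inequality I would monitor $D^L$ along an arbitrary $e_i$-string $b=c_0,c_1,\ldots,c_K=u_b^{\ell\Lambda_0}$ in $B$. Every step with $i_j\ne 0$ preserves $D^L$, because $D^L$ is a sum of classically invariant local energies. By Lemma~\ref{rec-const-energy}(\ref{i.L}) each $e_0$-step alters $D^L$ by a unit in the direction prescribed by the lemma, so telescoping gives
\[
    D^L(u_b^{\ell\Lambda_0})-D^L(b)\;=\;\#\{j:i_j=0\},
\]
matching the count from the first part and forcing the string-length bound to be tight. The hard step is that Lemma~\ref{rec-const-energy}(\ref{i.L}) has the explicit hypothesis $\varphi_0(c_{j-1})\ge 1$, which can fail at an intermediate vertex where $\varphi_0(c_{j-1})=0$ yet $\varepsilon_0(c_{j-1})\ge 1$ (so that $e_0$ is still defined). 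In such positions I would dualize via the map $\tau$ and Proposition~\ref{proposition.Dtau}, applying Lemma~\ref{rec-const-energy}(\ref{i.R}) to $\tau(e_0(c_{j-1}))$---whose $\varphi_0$ and $\varepsilon_0$ are both positive by construction---and transporting back through $\tau$ to recover the unit change in $D^L$. Once this is verified, the telescoping identity combines with the upper bound to prove the corollary.
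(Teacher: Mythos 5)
Your first half is sound and is essentially how the paper intends the corollary to follow: since $\varepsilon_i(u_{\ell\Lambda_0})=0$ for all $i$, any $e_i$-string inside the component $B(\Lambda')$ from $m(b\otimes u_{\ell\Lambda_0})$ to its highest weight keeps the right factor frozen, hence descends to a string in $B$ from $b$ to $u_b^{\ell\Lambda_0}$, and the number of $e_0$'s in it is the affine degree, which is $D(b)-D(u_b^{\ell\Lambda_0})$ by Corollary~\ref{corollary.D=degree}. This yields the upper bound on the minimum (the paper gives no separate proof; both corollaries are imported from \cite{ST:2011}).

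The lower bound is where your argument breaks. Lemma~\ref{rec-const-energy}\eqref{i.L} says $D^L(e_0(c))=D^L(c)+1$ when $\varphi_0(c)\ge 1$: under its hypothesis $e_0$ \emph{increases} $D^L$. If every $e_0$-step of a string from $b$ to $u_b^{\ell\Lambda_0}$ behaved this way, telescoping would give $\#\{j:i_j=0\}=D(u_b^{\ell\Lambda_0})-D(b)=-\deg\bigl(m(b\otimes u_{\ell\Lambda_0})\bigr)\le 0$, which is the \emph{negative} of what you need; so the claim that this ``matches the count from the first part'' is a sign error, and it already shows that the $e_0$-arrows that actually carry you toward the ground state are precisely those where the lemma's hypothesis fails. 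Your proposed repair via $\tau$ is circular: an $e_0$-arrow at $c$ in $B$ corresponds under $\tau$ to an $f_0$-arrow at $\tau(c)$, and the hypothesis $\varepsilon_0(\tau(c))\ge 1$ of part~\eqref{i.R} translates back to exactly $\varphi_0(c)\ge 1$, the condition you were trying to circumvent; the assertion that $\varphi_0$ and $\varepsilon_0$ are ``both positive by construction'' is unsupported and false in general. What the lower bound actually requires is a one-sided estimate valid for \emph{every} $e_0$-arrow in $B$ (for instance, that a single $e_0$ can move the energy toward the ground-state value by at most one unit); neither Lemma~\ref{rec-const-energy} nor your telescoping provides this, and it is the genuinely nontrivial input, supplied in \cite{ST:2011} through the Demazure-crystal structure of $B\otimes u_{\ell\Lambda_0}$ rather than by tracking the energy along arbitrary strings.
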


\begin{remark}
One special case of interest is when $B$ is a tensor product of perfect KR crystals of level $\ell$, 
and $\Lambda$ also has level exactly $\ell$. Then the right side of~\eqref{equation.expansion_nonperfect}
is a single highest weight crystal and the isomorphism in Proposition~\ref{proposition.expansion_nonperfect}
is used in the Kyoto path model~\cite{KMN1:1992}. Hence $u_b^{\ell \Lambda_0}$ does not in fact depend 
on $b$, simplifying Corollaries~\ref{corollary.D=degree} and~\ref{corollary.D=degree1}.
\end{remark}

\section{The charge construction}
\label{section.chargerev}

\subsection{The classical charge}\label{constra} 
Let us start by recalling the construction of the classical charge of a word due to Lascoux and 
Sch\"utzenberger~\cite{LS:1979}. Assume that $w$ is a word with letters in the alphabet 
$[n]:=\{1,\ldots,n\}$ which has partition content, i.e., the number of $j$'s is greater than or equal to
the number of $j+1$'s, for each $j=1,\ldots,n-1$. The statistic $\charge(w)$ is calculated as a sum 
based on the following algorithm. Scan the  word starting from its right end, and select the numbers 
$1,2,\ldots$ in this order, up to the largest possible $k$. We always pick the first available entry $j+1$ 
to the left of the previous entry $j$. Whenever there is no such entry, we pick the rightmost entry $j+1$, 
so we start scanning the word from its right end once again; in this case, we also add $k-j$ to the sum 
that computes $\charge(w)$. At the end of this process, we remove the selected numbers and repeat 
the whole procedure until the word becomes empty. 

\begin{example}\label{lsex}
{\rm 
Consider the word $w=11{\mathbf 3}2\mathbf{214}323$, where the first group of selected numbers 
is shown in bold. The corresponding contribution to the charge is $1$. After removing the bold numbers 
and another round of selections (again shown in bold), we have $1{\mathbf 1}2\mathbf{32}3$, so the 
contribution to the charge is $2$. We are left with the word $123$, whose contribution to the charge is 
$2+1=3$. So $\charge(w)=1+2+(2+1)=6$.
}
\end{example}
 
We now reinterpret the classical charge as a statistic on a tensor product of type $A_{n-1}^{(1)}$ 
KR crystals. Such a crystal indexed by a column 
of height $k$ is traditionally denoted $B^{k,1}$, and its vertices are indexed by increasing fillings of 
the mentioned column with integers in $[n]$. Given a partition $\mu$ (i.e., a dominant weight in the root 
system), let 
\begin{equation} \label{equation.tensor_mu}
	B_\mu:=\bigotimes_{i=1}^{\mu_1} B^{\mu_i',1}\,,
\end{equation}
where $\mu'$ is the conjugate partition to $\mu$.
This is simply the set of column-strict fillings of the Young diagram $\mu$ with integers in $[n]$. 
Note that unlike in Section~\ref{section.energy}, the tensor factors in~\eqref{equation.tensor_mu}
are ordered in weakly decreasing order.

Fix a filling $b$ in $B_\mu$ written as a concatenation of columns $b_1\ldots b_{\mu_1}$. We 
attach to it a filling $c:=\mbox{circ-ord}(b)=c_1\ldots c_{\mu_1}$ according to the following algorithm, which is based 
on the circular order $\prec_i$ on $[n]$ starting at $i$, namely 
$i\prec_i i+1\prec_i\cdots \prec_i n\prec_i 1\prec_i\cdots\prec_i i-1$. 

\begin{algorithm}\label{algreord}\hfill\\
let $c_1:=b_1$;\\
for $j$ from $2$ to $\mu_1$ do\\
\indent for $i$ from $1$ to $\mu_j'$ do\\
\indent\indent let $c_j(i):=\min\,(b_j\setminus\{c_j(1),\ldots,c_j(i-1)\},\,\prec_{c_{j-1}(i)})$\\
\indent end do;\\
end do;\\
return $c:=c_1\ldots c_{\mu_1}$.\\
\end{algorithm}

\begin{example}\label{exrinv}
{\rm 
Algorithm~\ref{algreord} constructs the filling $c$ from the filling $b$ below. The bold 
entries in $c$ are only relevant in Example {\rm \ref{exch}} below.
\begin{equation}\label{st}
	b=\tableau{{3}&{2}&{1}&{2}\\{5}&{3}&{2}\\{6}&{4}&{4}} \quad \text{and} \quad 
	c=\tableau{{3}&{3}&{\mathbf{4}}&{2}\\{\mathbf{5}}&{2}&{2}\\{\mathbf{6}}&{\mathbf 4}&{1}}\,.
\end{equation}
}
\end{example}

We introduce some terminology in order to reinterpret the classical charge in terms of a statistic 
on $B_\mu$. Given the considered filling $b$ in $B_\mu$, we define its {\em charge word} as 
the biword $\cw(b)$ containing a biletter $\binom{k}{j}$ for each entry $k$ in the column $b_j$ 
of $b$. We order the biletters in the decreasing order of the $k$'s, and for equal $k$'s, in the 
decreasing order of $j$'s. The obtained word formed by the lower letters $j$ will be denoted by 
$\cw_2(b)$. We refer to Example \ref{exch} for an illustration of the charge word. On the other
hand, given the filling $c=c_{1}\ldots c_{\mu_1}$ constructed by Algorithm \ref{algreord}, 
we say that the cell $\gamma$ in column $c_j$ and row $i$ is a descent if $c_j(i)>c_{j+1}(i)$, assuming that 
$c_{j+1}(i)$ is defined. Let ${\rm Des}(c)$ denote the set of descents in $c$. As usual, 
we define the arm length ${\rm arm}(\gamma)$ of a cell $\gamma$ as the number of cells to its right.

It is not hard to see that Algorithm~\ref{algreord} for constructing $c$ from $b$ translates 
precisely into the selection algorithm which computes $\charge(\cw_2(b))$. More precisely, 
consider the $i$th sequence $1,2,\ldots$ extracted from $\cw_2(b)$ (which turns out to have 
length $\mu_i$), and the letter $j$ in this sequence; then the top letter paired with the mentioned 
letter $j$ in $\cw(b)$ is precisely the entry $c_j(i)$ in row $i$ and column $j$ of the filling 
$c$. In particular, the steps to the right in the $i$th iteration of the charge computation 
correspond precisely to the descents in the $i$th row of $c$, while the corresponding 
charge contributions and arm lengths coincide. We conclude that 
\begin{equation}\label{chdesa}
	\sum_{\gamma\in{\rm Des}(c)}{\rm arm}(\gamma)=\charge(\cw_2(b))\,.
\end{equation}
For simplicity, we set $\charge(b):=\charge(\cw_2(b))$.

\begin{remark}
In~\cite{lenmpc} we showed that the charge statistic on $B_\mu$ can be derived from the 
Ram--Yip formula~\cite{raycfm} for the corresponding Macdonald polynomial at $t=0$. In fact, 
we showed that Algorithm~\ref{algreord} is closely related to the corresponding quantum Bruhat 
graph (see, e.g., \cite{bfpmbo}). So we can conclude that this graph explains the charge construction
itself. The mentioned idea was extended to type $C$, and it led to the definition of a type $C$ 
charge, that we describe in Section~\ref{constrc}.
\end{remark}

\begin{example}\label{exch}
{\rm 
Note that $\cw_2(b)$ for $b$ in Example~\ref{exrinv} is precisely the word $w$ in 
Example~\ref{lsex}. In fact, the full biword $\cw(b)$ is shown below, using the order on the 
biletters specified above. The index attached to a lower letter is the number of the iteration in 
which the given letter is selected in the process of computing $\charge(b)$:
\[
	\cw(b)=\left(\begin{array}{cccccccccc}6&5&4&4&3&3&2&2&2&1\\
	1_3&1_2&3_1&2_3&2_1&1_1&4_1&3_2&2_2&3_3\end{array}\right)\,.
\]
One can note the parallel between the mentioned selection process and the construction 
of $c$ from $b$ in Example~\ref{exrinv}. The entries in the cells of ${\rm Des}(c)$ 
are shown in bold in~\eqref{st}.
}
\end{example}

\subsection{The type $C$ charge}\label{constrc} 
In this section we recall from \cite{lenmpc} the construction of the type $C$ charge. We start by fixing a 
dominant weight $\mu$ in the root system of type $C_n$. Let 
\begin{equation}\label{bmuc}
	B_\mu:=\bigotimes_{i=1}^{\mu_1} B^{\mu_i',1}\,,
\end{equation}
where $B^{k,1}$ is the type $C_n^{(1)}$ KR crystal indexed by a column of height 
$k$. Note that $B_\mu$ is the set of fillings $b=b_1\ldots b_{\mu_1}$ of the shape $\mu$ with integers in $[\overline{n}]$ whose 
columns $b_j$ are KN columns; indeed, the KN columns of height $k$ label the vertices of $B^{k,1}$. As 
mentioned above, it will be more useful to represent $b_j$ in the split form $b^L_j b^R_j$; in this case, $b$ becomes 
a filling $b^L_{1}b^R_{1}\ldots b^L_{\mu_1}b^R_{\mu_1}$ of the shape $2\mu$.   

Now fix a filling $b$ in $B_\mu$, represented with split columns, which are labeled from left to right 
by $1,1',2,2',\ldots$. We can apply a slight modification of Algorithm \ref{algreord} to $b$ and 
obtain a filling $c=c^L_{1}c^R_{1}\ldots c^L_{\mu_1}c^R_{\mu_1}=\mbox{circ-ord}(b)$ of $2\mu$; namely, we start 
by setting $c^L_1:=b^L_1$, and then consider the (doubled) columns of $b$ from left to right. We use the 
circular order on $[\overline{n}]$ starting at various values $i$, which we still denote by $\prec_i$. 

\begin{example}\label{exchc}
{\rm 
Consider the following tensor product of KN columns:
\[
	\tableau{{\overline{5}}\\{\overline{3}}\\{\overline{2}}\\{\overline{1}}}\otimes \tableau{{3}\\
	{\overline{4}}\\{\overline{3}}}\otimes\tableau{{1}\\{3}\\{\overline{3}}} \,.
\]
This is represented with split columns as the following filling $b$ of the shape $2\mu=(6,6,6,2)$:
\[
	\tableau{{1}&{1'}&{2}&{2'}&{3}&{3'}\\ \\{\overline{5}}&{\overline{5}}&{2}&{3}&{1}&{1}\\{\overline{3}}&
	{\overline{3}}&{\overline{4}}&{\overline{4}}&{2}&{3}\\
	{\overline{2}}&{\overline{2}}&{\overline{3}}&{\overline{2}}&{\overline{3}}&{\overline{2}}\\
	{\overline{1}}&{\overline{1}}}\,,
\]
where the top row consists of the column labels. The corresponding filling $c$ is
\[
	\tableau{{\overline{5}}&{\overline{5}}&{\overline{4}}&{\overline{4}}&{\overline{3}}&
	{\overline{2}}\\{\overline{3}}&{\overline{3}}&{\overline{3}}&{\mathbf{\overline{2}}}&{1}&{1}\\
	{\overline{2}}&{\mathbf{\overline{2}}}&{{2}}&{\mathbf{3}}&{{2}}&   {{3}}\\
	{\overline{1}}&{\overline{1}}}\,.
\] 
}
\end{example}

Define the charge word $\cw(b)$ of $b$ by analogy with type $A$, as the biword containing 
a biletter $\binom{k}{j}$ for each entry $k$ in column $j$ of $b$; here $j$ and $k$ belong to 
the alphabets $\{1<1'<2<2'<\ldots\}$ and $[\overline{n}]$, respectively. We order the biletters as 
in the type $A$ case (in the decreasing order of the $k$'s, and for equal $k$'s, in the decreasing 
order of $j$'s), and define $\cw_2(b)$ in the same way (as the word formed by the lower letters 
$j$). 

The modification of Algorithm~\ref{algreord} for constructing $c$ from $b$ can be rephrased 
in terms of $\cw(b)$, as explained below; we will refer to this rephrasing as the charge algorithm. 
We start by scanning $\cw_2(b)$ from right to left and by selecting the entries 
$1,1',2,2',\ldots,\mu_1,(\mu_1)'$ in this order, according to the following rule: always pick the first 
available entry to the left, but if the desired entry is not available then scan the word from its right 
end once again. As in type $A$, we can see that the sequence of top letters paired with 
$1,1',2,2',\ldots,\mu_1,(\mu_1)'$ is the first row of the filling $c$ (read from right to left). We 
then remove the selected entries from $\cw_2(b)$ and repeat the above procedure, which will 
now give the other rows of $c$, from top to bottom. It was shown in \cite{lenmpc} that we 
always go left from $j$ to $j'$, but we can go right from $j'$ to $j+1$.

\begin{example}
{\rm 
This is a continuation of Example~\ref{exchc}. The charge word $\cw(b)$, with the order on the 
biletters indicated above, is
\[
	\left(\begin{array}{cccccccccccccccccccc}\overline{1}&\overline{1}&\overline{2}&\overline{2}&
	\overline{2}&\overline{2}&\overline{3}&\overline{3}&\overline{3}&\overline{3}&\overline{4}&
	\overline{4}&\overline{5}&\overline{5}&3&3&2&2&1&1\\
	1'_4&1_4& 3'_1&2'_2&1'_3&1_3&  3_1&2_2&1'_2&1_2&  2'_1&2_1&  1'_1&1_1&  3'_3&2'_3&  
	3_3&2_3&  3'_2&3_2
	\end{array}\right)   \,.
\] 
The index attached to a lower letter is the number of the iteration in which the given letter is
selected by the charge algorithm.
}
\end{example}

Descents are defined as usual, cf. Section~\ref{constra}. It is easy to see that the descents in $c$ 
correspond to the steps to the right in the charge algorithm applied to $\cw_2(b)$. By an observation 
made above, we only have descents of the form $c^R_j(i)>c^L_{j+1}(i)$. We are led to the following 
definition of the type $C$ charge.

\begin{definition} 
Consider a word $w$ with letters in the alphabet $1,1',2,2',\ldots$, containing as many letters $j$ 
as $j'$, and at least as many letters $j$ as $j+1$. Apply the charge algorithm to $w$, and assume 
that a selected entry $j'$ is always to the left of the previously selected $j$. Let $\charge(w)$ be 
the sum of $k-j$ for each selected entry $j+1$ to the right of the previously selected $j'$, where 
the selected entries in the given iteration are $1,1',\ldots,k,k'$. 
\end{definition}

The above discussion leads to the following result:
\begin{equation}\label{chdesc}
	\frac{1}{2}\sum_{\gamma\in{\rm Des}(c)}{\rm arm}(\gamma)=\charge(\cw_2(b))\,.
\end{equation}
For simplicity, we again set $\charge(b):=\charge(\cw_2(b))$.

\begin{example}
{\rm 
This is still a continuation of Example~\ref{exchc}. The entries in the descents of $c$ are 
shown above in bold. Correspondingly, the charge algorithm applied to $\cw_2(b)$ makes 
one step to the right in the second iteration (from $2'$ to $3$), and two steps to the right in the 
third iteration (from $1'$ to $2$ and from $2'$ to $3$). Thus, $\charge(b)=1+(2+1)=4$. 
}
\end{example}

\section{Energy and charge in type $A$}
\label{section.energyA}

In this section we rederive the result of Nakayashiki--Yamada~\cite{NY:1997} showing the equality of
the energy function and charge in type $A_{n-1}$. We do this in a more conceptual way, by using the method of Demazure arrows
(the proof in \cite{NY:1997} is based on subtle combinatorics of Young tableaux). Furthermore, we work 
with all the crystal vertices in a tensor product of columns, not just the highest weight vertices considered in 
\cite{NY:1997}. Note that the setup in the mentioned paper is that of the right energy and a tensor product of columns of increasing heights which, by Proposition \ref{proposition.Dtau}, is equivalent to the setup in this paper.

We start by studying the behavior of the type $A$ charge with respect to the crystal operators.

\begin{proposition} \label{chaf1n}
The type $A_{n-1}$ charge is preserved by the crystal operators $f_1,\ldots,f_{n-1}$.
\end{proposition}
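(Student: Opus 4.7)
First, by the tensor product signature rule applied to $b = b_1 \otimes \cdots \otimes b_{\mu_1}$, there is a unique column index $j_0$ such that $b_{j_0}$ contains $i$ but not $i+1$, and $f_i$ modifies $b$ solely by changing this $i$ into $i+1$. In the biword $\cw(b)$, this is the sole change: the biletter $\binom{i}{j_0}$ becomes $\binom{i+1}{j_0}$. Because biletters are sorted by decreasing top letter, with ties broken by decreasing bottom letter, this biletter migrates from a determined slot in the block $I_i$ (the biletters with top letter $i$, ordered by decreasing column index) to a determined slot in the block $I_{i+1}$, which lies strictly to the left in the sort. All other biletters retain their positions, so $\cw_2(f_i(b))$ is obtained from $\cw_2(b)$ by extracting one occurrence of the letter $j_0$ from a specific position in $I_i$ and reinserting it leftward at a specific position within $I_{i+1}$.

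The plan is then to invoke the classical Lascoux--Sch\"utzenberger theorem that the charge of a word with partition content depends only on its plactic (Knuth) class, equivalently on its RSK insertion tableau $P(\cdot)$. Thus the proposition reduces to proving $P(\cw_2(f_i(b))) = P(\cw_2(b))$, and I intend to realize the relocation of $j_0$ as a sequence of elementary Knuth transpositions carrying it one step to the left at a time.

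At each adjacent swap, $j_0$ is interchanged with a neighboring letter $j'$ coming either from the head of $I_i$ (bottom letters $j' > j_0$ with $i \in b_{j'}$) or from the tail of $I_{i+1}$ (bottom letters $j' < j_0$ with $i+1 \in b_{j'}$), with a suitable adjacent letter supplying the Knuth ``witness.'' The signature rule identifies $j_0$ through a $+-$-cancellation in the $i$-signature of $b$; this pattern translates into precisely the strict and weak inequalities among column indices, and hence among neighboring letters in $\cw_2(b)$, needed for each Knuth swap to be valid.

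The main obstacle will be the verification of the Knuth conditions at every swap, which requires a careful case analysis according to whether each intermediate column contains $i$, $i+1$, both, or neither. Should the Knuth-equivalence route prove too delicate, a direct alternative is to run the charge algorithm on $\cw_2(b)$ and $\cw_2(f_i(b))$ in parallel and show that their selection patterns and arm-length contributions agree iteration by iteration, again leveraging the same signature-rule constraints to pin down $j_0$.
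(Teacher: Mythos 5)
Your proposal follows essentially the same route as the paper: reduce the statement to the plactic invariance of the Lascoux--Sch\"utzenberger charge, after showing that $\cw_2(f_i(b))$ and $\cw_2(b)$ are Knuth-equivalent because $f_i$ merely relocates one letter $j_0$ from the block $I_i$ to the block $I_{i+1}$. The case analysis you flag as the main obstacle---realizing that relocation as a sequence of valid Knuth transpositions governed by the signature-rule bracketing---is precisely the content of Lascoux's double crystal graph result (equivalently, a jeu de taquin slide on the two-column skew tableau formed by the letters paired with $i$ and $i{+}1$), which is what the paper invokes at that point.
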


\begin{proof}
Let $b$ be a tensor product of columns in some $B_\mu$ (see Section \ref{constra} and the terminology 
therein, which we use freely).  It is known that the word $\cw_2(f_i(b))$ is in the same plactic equivalence 
class (see, e.g., \cite{lltpm}) as $\cw_2(b)$. More precisely, the former is obtained from the latter by considering 
its subword formed by the letters $x$ corresponding to biletters $\left(\!\!\begin{array}{c}i+1\\x\end{array}\!\!\right)$ 
or $\left(\!\!\begin{array}{c}i\\x\end{array}\!\!\right)$ in $\cw(b)$, by viewing this subword as (the column word of) 
a skew tableau with two columns, and by using jeu de taquin to move a letter from the right column to the left 
one. This is explained in detail in \cite[Section 2]{lasdcg}, based on the notion of ``double crystal graphs''. Then 
we use the well-known fact that the classical charge is preserved by the plactic relations (see, 
e.g., \cite[Lemma 6.6.6 (ii)]{lltpm}). 
\end{proof}

\begin{proposition} \label{chae0}
Let $B=B^{r_N,1}\otimes \cdots \otimes B^{r_1,1}$ be of type $A_{n-1}^{(1)}$ with $r_N\ge r_{N-1} \ge \cdots \ge
r_1$ and $b\in B$. If $\varphi_0(b)\ge 1$ and $\varepsilon_0(b)\ge 1$, then the type $A_{n-1}$ 
charge satisfies
\[
	\charge(e_0(b))=\charge(b)-1\,.
\]
\end{proposition}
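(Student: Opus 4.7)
The plan is to compare the Lascoux--Sch\"utzenberger charge algorithm applied to $\cw_2(b)$ and $\cw_2(e_0(b))$ iteration by iteration, or equivalently (via~\eqref{chdesa}), to compare the descent arm-length sums of $c=\mbox{circ-ord}(b)$ and $c'=\mbox{circ-ord}(e_0(b))$.

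First, I would use the tensor-product signature rule for $e_0$ in type $A_{n-1}^{(1)}$, together with the simultaneous hypotheses $\varepsilon_0(b)\ge 1$ and $\varphi_0(b)\ge 1$, to locate the column (call it column $j_0$ in the $B_\mu$ conventions of Section~\ref{constra}) on which $e_0$ acts by replacing a $1$ with an $n$. At the level of the charge biword, this removes $\binom{1}{j_0}$ from $\cw(b)$ and inserts $\binom{n}{j_0}$; concretely, a single occurrence of the column label $j_0$ migrates from inside the content-$1$ block at the right end of $\cw_2(b)$ to inside the content-$n$ block at the left end of $\cw_2(e_0(b))$, with all other letters retaining their relative order.

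The heart of the argument is to pair iterations of the algorithm on the two words by the leading $1$ they select, and to show that exactly one iteration differs. In that exceptional iteration, the algorithm picks up the relocated copy of $j_0$: on $\cw_2(b)$, the relocated label sits at the right end of the word, forcing a wraparound from value $j_0-1$ to value $j_0$, which contributes $k-(j_0-1)$ to the charge (with $k$ the maximum value reached in the iteration); on $\cw_2(e_0(b))$, the same label is now to the left of the current scan position, so it is selected without wraparound, but then the algorithm must wrap from value $j_0$ to value $j_0+1$, contributing $k-j_0$. The arithmetic difference $(k-j_0)-(k-j_0+1)=-1$ gives the claimed decrease, provided all other iterations behave identically.

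The main obstacle is proving the localization: that the relocation of a single letter affects only one iteration, and shifts exactly one wraparound by one position. Both hypotheses are essential here. The condition $\varphi_0(b)\ge 1$ guarantees the existence of another column containing an $n$ and no $1$, which anchors the global $1$-to-$n$ pairing structure away from column $j_0$; and the weakly decreasing height condition $r_N\ge\cdots\ge r_1$ forces the columns containing a $1$ to be visited in an order compatible with the algorithm's leftward sweeps. An alternative and perhaps cleaner formulation works with the reordered fillings: one tracks how the single-entry change in column $j_0$ propagates via the circular orders through $c_{j_0},c_{j_0+1},\ldots,c_{\mu_1}$, and shows that the cumulative change in the descent arm-length sum $\sum_{\gamma\in{\rm Des}}{\rm arm}(\gamma)$ is exactly $-1$.
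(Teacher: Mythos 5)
Your overall strategy is the same as the paper's: locate the column $j_0$ where $e_0$ replaces a $1$ by an $n$, and compare either the iterations of the charge algorithm on $\cw_2(b)$ and $\cw_2(e_0(b))$ or, equivalently, the descent arm-length sums of $c=\mbox{circ-ord}(b)$ and $d=\mbox{circ-ord}(e_0(b))$. The setup in your first two paragraphs is correct, and your third paragraph accurately describes what the paper calls the \emph{main case}. The problem is that the localization statement on which your argument rests --- ``exactly one iteration differs, and it shifts exactly one wraparound by one position'' --- is not merely the hard part to be proved; it is false in general. Take $n=4$, $\mu=(3,3)$, with columns $b_1=\{3,4\}$, $b_2=\{1,2\}$, $b_3=\{1,4\}$. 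Then $e_0$ changes the $1$ in $b_2$ to a $4$, and
\[
	c=\left(\begin{smallmatrix}3&1&1\\4&2&4\end{smallmatrix}\right),
	\qquad
	d=\left(\begin{smallmatrix}3&4&4\\4&2&1\end{smallmatrix}\right),
\]
so \emph{three} entries change, spread over \emph{two} rows. Correspondingly $\cw_2(b)=311232$ and $\cw_2(e_0(b))=321123$, and the two iterations of the charge algorithm contribute $2+2$ before and $0+3$ after: both iterations change, the first loses a wraparound entirely, and the second gains one, with the $-1$ emerging only from the global balance. The reason is that $\mbox{circ-ord}$ reorders column $j+1$ using the entries of the reordered column $j$, so a change in one entry of $c_{j_0}$ can cascade rightward whenever the subsequent columns contain both a $1$ and an $n$ in the relevant rows.

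The paper's proof confronts exactly this: after the main case (which matches your picture), it rules out two configurations as impossible (using $\varphi_0(b)\ge 1$ and the mechanics of Algorithm~\ref{algreord}) and then handles the surviving exceptional configuration, in which a chain of columns $c_{j+1},\dots,c_{j+p}$ all carry a $1$ in row $i$ and an $n$ in rows $k_1\ge\cdots\ge k_p>i$; there the entries $1$ and $n$ swap along the whole chain, several descents migrate across several rows, and a global accounting of arm lengths is needed to see that the total still drops by exactly $1$. So your proposal is not wrong in spirit, but it omits the case analysis that constitutes the actual content of the proof, and the clean ``one iteration, one shifted wraparound'' mechanism you propose to verify cannot be established because it does not hold.
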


\begin{proof}
Let $b=b_1\ldots b_{\mu_1}$, and assume that $e_0$ changes the entry $1$ in $b_j$ to $n$. The condition 
$\varphi_0(b)\ge 1$ implies $j>1$. Let $c:=\mbox{circ-ord}(b)$ with $c=c_1\ldots c_{\mu_1}$, where $c_j(i)=1$, and 
$d:=\mbox{circ-ord}(e_0(b))$ with $d=d_1\ldots d_{\mu_1}$ (recall that the map $\mbox{circ-ord}$ is defined by 
Algorithm~\ref{algreord}). 

{\em The main case.} We start by assuming that $c_{j-1}(i)>1$ and that the column $c_{j+1}$, if it exists, does 
not contain $n$ in a row $k\ge i$. We claim that, in this case, $d_l(k)=c_l(k)$ for all $k,l$ with the exception of 
$d_j(i)=n$. Indeed, all we need to check are the following, in this order: (1) the entries $d_j(k)$ for 
$k=1,\ldots,i-1$ are as claimed, since the alternative, namely that one of them is $n$, would lead to a 
contradiction; (2)  the value of $d_j(i)$ follows from the fact that $c_{j-1}(i)\ne 1$; (3) the value of 
$d_{j+1}(i)$, if this entry exists, follows from the above condition on the column $c_{j+1}$. By the above 
facts, we have the same descents in the fillings $c$ and $d$, with the exception of the descent 
$c_{j-1}(i)>c_j(i)=1$, which corresponds to $d_j(i)=n>d_{j+1}(i)$, assuming that $d_{j+1}(i)$ exists. 
The difference between the arm lengths of the mentioned descents is $1$, which concludes the 
proof by (\ref{chdesa}). The remaining part of the proof treats the exceptions to this case. 

{\em Exception {\rm 1}.} Assume that $c_{j-1}(i)=1$. Then the column $b_{j-1}$ must contain $n$ (otherwise 
the entry $1$ in $b_j$ would not be the leftmost unpaired $1$). Assuming that $c_{j-1}(k)=n$, we must have 
$k>i$, by Algorithm \ref{algreord} and the fact that $c_j(i)=1$. The condition  $\varphi_0(b)\ge 1$ implies 
$j>2$, and we have $c_{j-2}(i)=1$ (otherwise $c_{j-1}(i)=1$ and $c_{j-1}(k)=n$ for $k>i$ are in contradiction 
with the way in which Algorithm \ref{algreord} reorders the column $b_{j-1}$). This reasoning can be continued
indefinitely, so this case is impossible. 

{\em Exception {\rm 2}.} Assume that $c_{j-1}(i)>1$, but $c_{j+1}(k)=n$ for some $k\ge i$. Then $b_{j+1}$ 
must contain $1$ as well, namely $c_{j+1}(l)=1$ (otherwise the entry $1$ in $b_j$ would be paired with the 
entry $n$ in $b_{j+1}$). We must have $l\le i$, by Algorithm \ref{algreord} and the fact that $c_{j}(i)=1$. The 
above facts imply that $l<k$. 

{\em Exception {\rm 2.1}.} Assume that $l<i$. The fact that $c_{j}(l)>1$, $c_{j+1}(l)=1$, and $c_{j+1}(k)=n$ 
for $k>l$ are in contradiction with the way in which Algorithm \ref{algreord} reorders the column $b_{j+1}$. 
So this case is impossible.

{\em Exception {\rm 2.2}.} The only possibility left is that $c_{j+1}(i)=1$ and $c_{j+1}(k)=n$ for $k=k_1>i$. 
Let us assume for the moment that the second condition in Exception 2 holds for the column $c_{j+2}$, 
namely $c_{j+2}(k_2)=n$ for some $k_2\ge i$. By the same reasoning as above, we deduce that $c_{j+2}(i)=1$, 
so $k_2>i$. In fact, we also have $k_1\ge k_2$, by Algorithm \ref{algreord}. By continuing this reasoning, we obtain
\[c_{j+1}(i)=\ldots=c_{j+p}(i)=1\,,\;\;\;\;\;c_{j+1}(k_1)=\ldots=c_{j+p}(k_p)=n\,,\;\;\;\mbox{where $k_1\ge\ldots\ge k_p$}\,;\]
on the other hand, we can assume that, if the column $c_{j+p+1}$ exists, then it does not contain $n$ in rows 
$i,i+1,\ldots$. This information about the filling $c$ is represented in the figure below. The column $c_j$ is 
the leftmost column with an entry $1$ displayed, while $+$ and $*$ stand for entries different from $1$ and 
$n$, respectively. The boxes shown in bold represent descents. By a reasoning similar to the main case 
above, we deduce that the only difference between the fillings $c$ and $d$ consists of the entries $1$ 
and $n$ in the figure changing to $n$ and $1$ in $d$, respectively. This leads to the marked descents 
moving to the boxes indicated by the arrows. It is now easy to see that the sum of the arm lengths of 
descents decreases by $1$ when passing from $c$ to $d$.

\pagebreak

$\;$

\vspace{-17cm}

\[\;\;\;\;\;\;\;\;\;\;\;\;\;\;\mbox{\includegraphics[scale=0.75]{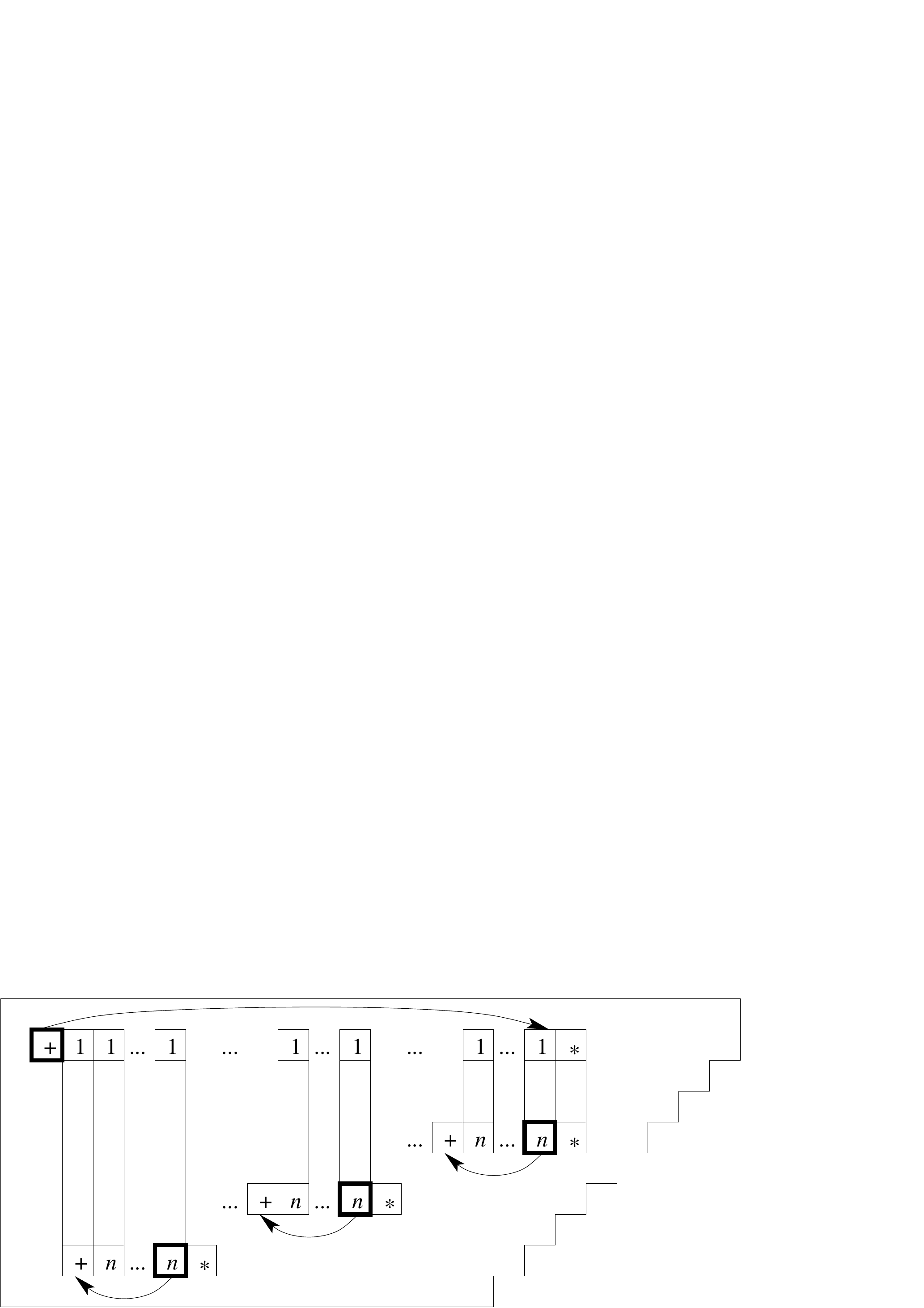}}\]
\end{proof}

\begin{proof}[Proof of Theorem {\rm \ref{theorem.main}} in type $A$]
The proof is immediate based on Corollary~\ref{corollary.D=degree} and Propositions~\ref{chaf1n}, \ref{chae0}
using the fact that KR crystals of type $A_{n-1}^{(1)}$ are perfect.
\end{proof}

\section{Kyoto path model for nonperfect type $C$}
\label{section.kyotoC}

In this section, we make Proposition~\ref{proposition.expansion_nonperfect} more explicit in the case
of $B=B^{r_N,1} \otimes \cdots \otimes B^{r_1,1}$ and $\ell=1$ for type $C_n^{(1)}$, 
by providing a correspondence between highest weight elements (or ground states)
in $B\otimes B(\Lambda_0)$ and elements in $B(\Lambda')$ in the sum on the right hand side 
of~\eqref{equation.expansion_nonperfect}, which are of type $A$. This will help in the next section
to prove Theorem~\ref{theorem.main} for type $C_n^{(1)}$.

We call the highest weight elements in $B\otimes B(\Lambda_0)$ \textit{ground state paths}.
There is a recursive construction for them, which starts by listing all elements $b_1 \in B^{r_1,1}$ such that
$\varepsilon(b_1) = \Lambda_0$. Suppose $b_k \otimes \cdots \otimes b_1 \in
B^{r_k,1} \otimes \cdots \otimes B^{r_1,1}$ are already constructed. Then $b_{k+1} \in B^{r_{k+1},1}$
can be any of the elements such that $\varepsilon(b_{k+1}) = \varphi(b_k)$. The weight of the
ground state is $\varphi(b_N)$, which is some fundamental weight $\Lambda_h$.
For perfect crystals there are unique elements $b_N, \ldots, b_1$ with the described properties.
However, in type $C_n^{(1)}$ the crystals $B^{r,1}$ are not perfect and the above construction
gives a tree of ground state elements.

\begin{example} \label{example.ground_states}
{\rm 
Take $B=B^{1,1} \otimes B^{2,1} \otimes B^{2,1} \otimes B^{3,1}$ of type $C_3^{(1)}$. Then
$b_1$ is the column $321$ and $b_2$ the column $\overline{2}\overline{3}$. For $b_3$
there are two choices, namely the columns $32$ or $\overline{2}2$. In the first case $b_4$
is $\overline{3}$, and in the second case $b_4$ can be $2$ or $\overline{1}$. In summary
the three ground states are
\[
	\tableau{\overline{3}} \otimes \tableau{2\\ 3}\otimes \tableau{{\overline{3}}\\{\overline{2}}}  \otimes \tableau{1\\ 2\\ 3} \otimes u_{\Lambda_0}
	\qquad
	\tableau{2} \otimes \tableau{2\\ {\overline{2}}}\otimes \tableau{{\overline{3}}\\{\overline{2}}}  \otimes \tableau{1\\ 2\\ 3} \otimes u_{\Lambda_0}
	\qquad 
	\tableau{\overline{1}} \otimes \tableau{2\\ {\overline{2}}}\otimes \tableau{{\overline{3}}\\{\overline{2}}}  \otimes \tableau{1\\ 2\\ 3} \otimes u_{\Lambda_0}\; .
\]
The weights are $\Lambda_2$, $\Lambda_2$, and $\Lambda_0$, respectively.
}
\end{example}

\begin{theorem}\label{theorem.ground2typeA}
Let $B=B^{r_N,1} \otimes \cdots \otimes B^{r_1,1}$ of type $C_n^{(1)}$.
From each ground state $u\otimes u_{\Lambda_0} \in B \otimes B(\Lambda_0)$ there exists a sequence 
of Demazure arrows $f_i$ (see Definition~{\rm \ref{definition.demazure_arrows}}), which ends at an element
$b\otimes u_{\Lambda_0}$ such that $b$ does not contain any barred letter.
\end{theorem}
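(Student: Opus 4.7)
My plan is to construct an explicit sequence of Demazure arrows from $u \otimes u_{\Lambda_0}$ to some $b \otimes u_{\Lambda_0}$ with $b$ containing no barred letters, arguing by induction on the total number of barred entries in $u = u_N \otimes \cdots \otimes u_1$.

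I would first record two preliminary observations. Any Demazure arrow applied to an element of the form $v \otimes u_{\Lambda_0}$ necessarily acts on the $v$ factor and therefore returns an element $v' \otimes u_{\Lambda_0}$ of the same form: for $i \neq 0$ this is because $\varphi_i(u_{\Lambda_0}) = 0$; for $i=0$ the tensor product rule yields $\varepsilon_0(v \otimes u_{\Lambda_0}) = \varepsilon_0(v)$ (using $\varepsilon_0(u_{\Lambda_0}) = 0$ and $\varphi_0(u_{\Lambda_0}) = 1$), so the level-$1$ Demazure condition $\varepsilon_0 \geq 1$ coincides with the signature-rule condition $\varepsilon_0(v) \geq \varphi_0(u_{\Lambda_0})$ that forces $f_0$ to target the left factor. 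I would also note that the constraint $\varepsilon(u_1) = \Lambda_0$ forces $u_1$ to be the classical highest weight column $1 2 \cdots r_1$, so $u_1$ has no barred letters and supplies a distinguished minus sign to the $0$-signature of $u$ from its entry $1$.

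For the inductive step, suppose $v$ has at least one barred letter. I would single out a barred entry $\overline{k}$ occurring in some column $v_j$ with $j \geq 2$, and apply a cascade of classical Demazure arrows $f_{k-1}, f_{k-2}, \ldots, f_1$ (walking the barred letter down through $\overline{k-1}, \ldots, \overline{1}$), followed by $f_0$ (flipping the resulting $\overline{1}$ to $1$). Each $f_\ell$ for $1 \leq \ell \leq n-1$ is automatically a Demazure arrow whenever applicable, since Definition~\ref{definition.demazure_arrows} only demands $\varphi_\ell > 0$; the nontrivial points are that the cascade should consistently target the same column $v_j$ throughout, and that the final $f_0$ should satisfy $\varepsilon_0 \geq 1$. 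I would choose $\overline{k}$ so that its plus sign is leftmost in the relevant $\ell$-signatures (for example, taking the barred letter in the leftmost column containing any), and argue via the signature rule that successive $f_\ell$ then act on this same letter without interference from other barred entries; if necessary, I would tune the order in which barred letters across different columns are processed so that this targeting property is preserved.

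The principal obstacle is verifying the level-$1$ condition $\varepsilon_0(v') \geq 1$ just before each $f_0$ step. I would trace the $0$-signature of the current state $v'$, whose minus signs come from $1$'s in columns and whose plus signs come from $\overline{1}$'s. The minus from $u_1$ is always present, but can be cancelled by plus signs contributed by $\overline{1}$'s in intermediate columns, including the new $\overline{1}$ that the preceding $f_1$ has just produced. To guarantee a surviving minus, I expect to process barred letters in a judicious order --- for instance, eliminating those in columns farther from $u_1$ first --- so that $1$'s produced in earlier iterations of the cascade contribute additional minus signs to the $0$-signature by the time the next $f_0$ is required. Carrying out this signature bookkeeping, with the explicit constraints on ground-state columns imposed by the recursion $\varepsilon(u_{k+1}) = \varphi(u_k)$ as input, will constitute the bulk of the proof.
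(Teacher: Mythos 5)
Your overall strategy (explicit sequences of Demazure arrows, induction on the number of barred letters, the observation that $u_1$ must be the column $12\cdots r_1$) points in the right direction, but the core step is both missing and, as specified, unworkable. The scheme ``single out a barred entry $\overline{k}$ and apply $f_{k-1},\ldots,f_1,f_0$ to walk it down'' presumes you can steer each single application of $f_\ell$ onto the chosen letter; in reality the signature rule decides where $f_\ell$ acts, and it will frequently either be undefined or act on an \emph{unbarred} letter. This already fails on the second ground state of Example~\ref{example.ground_states}: for $v_0=2\otimes 2\overline{2}\otimes\overline{3}\,\overline{2}\otimes 123$ one checks that $\varphi_1(v_0)=0$ (the unbarred $2$ in the leftmost factor has $\varepsilon_1=1$ and brackets the unique surviving plus coming from the $\overline{2}$'s), so no $\overline{2}$ can be reached by $f_1$ at all; and $f_2(v_0)$ acts on that same unbarred $2$, turning it into $3$ (see Figure~\ref{figure.paths}), not on the $\overline{3}$. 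So whichever barred letter you select, your cascade either cannot start or is misdirected at its very first step, and re-ordering the barred letters does not help, because the obstruction is caused by unbarred letters, which your cascade never raises. This is exactly why the paper's proof applies strings $F_j=f_0^{k+1}f_1^{2k+2}\cdots f_{n-1}^{2k}f_n^{k}$ with carefully chosen multiplicities, processing a whole ``layer'' per round: the blocking unbarred letters are incremented in the same round in which one barred letter is eliminated (this is the content of Rule~\ref{uprule} and the lattice-path bookkeeping), and consequently the terminal type-$A$ element $b$ is \emph{not} obtained from $u$ by merely unbarring its entries. Producing a workable replacement for the cascade is the actual content of the theorem, so the proposal has a genuine gap.

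By contrast, the step you flag as the ``principal obstacle'' --- verifying $\varepsilon_0\ge 1$ before each $f_0$ --- is the easy part, and your worry about cancellation is unfounded. Under the paper's tensor product convention one has $\varepsilon_0(v'\otimes u_1)=\varepsilon_0(u_1)+\max\bigl(0,\varepsilon_0(v')-\varphi_0(u_1)\bigr)\ge\varepsilon_0(u_1)$: the contribution of the rightmost factor to $\varepsilon_0$ sits at the extreme end of the signature and can never be bracketed by plus signs coming from factors to its left. Since $u_1=12\cdots r_1$ contains a $1$ and is never modified by any operator in the sequence, $\varepsilon_0\ge 1$ holds throughout and every applicable $f_0$ is automatically a Demazure arrow; no judicious ordering is needed for this point.
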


Note that in Theorem~\ref{theorem.ground2typeA} there is no assumption on the order of the $r_i$.

In order to provide a proof of Theorem~\ref{theorem.ground2typeA}, we describe the explicit
sequence of $f_i$ satisfying the required conditions. For this we recursively define the following
objects.
\begin{itemize}
\item[-]
Let $\lambda_0 \subseteq \lambda_1 \subseteq \cdots \subseteq \lambda_s$ ($s$ is given below) be a sequence of 
shapes, where $\lambda_0$ is a single column of height $h$ if the weight of the ground state path
$u \otimes u_{\Lambda_0}$ is $\Lambda_h$. The other shapes are all of the form $\lambda(k,h_2,h_1)$, a 
partition with $k$ columns of height $n$ followed by two columns of heights $n>h_2\ge h_1\ge 0$, respectively.
If $\lambda_j = \lambda(k,h_2,h_1)$, then $\lambda_{j+1} = \lambda(k,h_2+1,h_1+1)$, where we identify
$\lambda(k,h_2+1,h_1+1)\cong \lambda(k+1,h_1+1,0)$ if $h_2+1=n$ and $h_1+1<n$, and 
$\lambda(k+2,0,0)$ if $h_2+1=h_1+1=n$. This adds one horizontal domino in the consecutive rightmost 
columns up to height $n$.
\item[-] We recursively define
\[
	v_{j+1} := F_j(v_j)
\] 
with $v_0=u$ the ground state and $F_j$ the sequence
\[
	F_j := f_0^{k+1} f_1^{2k+2} \cdots f_{h_1}^{2k+2} f_{h_1+1}^{2k+1} \cdots f_{h_2}^{2k+1} f_{h_2+1}^{2k} \cdots 
	f_{n-1}^{2k} f_n^k\,,
\]
if $\lambda_j=\lambda(k,h_2,h_1)$. We continue doing this as long as possible, in other words, until $F_s(v_s)$ is undefined.
\end{itemize}

\begin{example} \label{example.paths}
{\rm
Let $u \otimes u_{\Lambda_0}$ be the second ground state from Example~\ref{example.ground_states}. 
Then the sequence of $\lambda_0\subseteq \lambda_1 \subseteq \cdots \subseteq \lambda_s$ is
\[
	\tableau{\mbox{}\\ \mbox{}} \subseteq \tableau{\mbox{}&\mbox{}\\ \mbox{} \\ \mbox{}} \subseteq
	\tableau{\mbox{}&\mbox{}&\mbox{}\\ \mbox{}&\mbox{}\\ \mbox{}} \subseteq
	\tableau{\mbox{}&\mbox{}&\mbox{}\\ \mbox{}&\mbox{}& \mbox{} \\ \mbox{}&\mbox{}}
\]
with
\begin{equation*}
\begin{split}
	F_0 &= f_0 f_1 f_2\\
	F_1 &= f_0^2 f_1^3 f_2^2 f_3\\
	F_2 &= f_0^2 f_1^4 f_2^3 f_3.
\end{split}
\end{equation*}
}
\end{example}

Let $\lambda_j=(\lambda_{j1}\ge\lambda_{j2}\ge\ldots)$, and consider the conjugate partition 
$\lambda_j'=(\lambda_{j1}'\ge\lambda_{j2}'\ge\ldots)$. We will now explain how to represent an 
element $v_j$ by a collection of non-crossing lattice paths (which might touch each other); see 
Figure~\ref{figure.paths}. The paths have the following types of steps: down (southeast), up (northeast), 
and horizontal (east). More precisely, the paths in the collection $P_j=\{p_1,\ldots,p_l\}$ representing $v_j$
correspond to the $l=\lambda_{j1}$ columns of $\lambda_j$, and they satisfy the conditions below.
\begin{enumerate}
\item The endpoints of $p_1,\ldots,p_l$ are aligned at height 0 from right to left, and $p_i$ starts at 
height $\lambda_{ji}'$.
\item  The paths $p_2,\ldots,p_l$ and the segment of $p_1$ before the end of $p_2$ (in case $p_2$ exists) 
consist entirely of down and horizontal steps. Two up or down steps never lie below one another, and 
neither do only horizontal steps.
\item The path $p_i$ starts after $p_{i+2}$ ends, for $i=1,\ldots,l-2$.
\end{enumerate}
The terms ``before'' and ``after'' refer here to the order of the corresponding $x$-coordinates, with equality
allowed. Each down (resp. up) step starting at height $i$ is labeled by a letter $i$ (resp. $\bar{\imath}$). We
define ${\rm word}(P_j)$ as the word obtained by reading the labels on the paths in $P_j$ from left to right
(recall that two labels never lie below one another). 

Let us now explain the construction of the collections $P_j$. We start by defining $P_0$ as consisting of 
a single path: the one having the same word as  $u=v_0$, ending at height $0$, and having no horizontal 
steps; the word of $u$, denoted ${\rm word}(u)$, is defined as usual, by reading its columns from left to right, 
bottom to top. We construct the collections $P_j$ recursively, via the following transformation rule 
$P\mapsto {\rm up}(P)$ on a collection of paths $P=(p_1,\ldots,p_l)$ satisfying conditions (1)-(3) above. 

\begin{arule}\label{uprule} $\,$\vspace{-2mm}
\begin{itemize}
\item[-] Locate the leftmost up step in $p_1$, and let $\overline{y}$ be its label. Replace it with a horizontal 
step at height $y$ and a down step below it ending at height $0$. 
\item[-] Shift up by $1$ the segment of $p_1$ to the left of the position where the above change occurred, 
to connect it to the tail of $p_1$. Shift up by $1$ all the other paths. 
\item[-] Replace any down step above height $n$ by a down step below it, ending at height $0$. 
\item[-] Consider the down steps ending at height $0$ from right to left, excluding the rightmost one. 
Match them with the shifts of $p_2,\ldots,p_l$, in this order, and connect the matched pairs by horizontal 
lines of height $1$. 
\end{itemize}
\end{arule}

Note that, in the last step of the rule, the last one or two down steps might have no match, so they start 
new paths. For simplicity, any horizontal steps at the beginning of a path are ignored. It is easy to see that, 
since $P$ satisfies conditions (1)-(3) above, the rule can be applied, and ${\rm up}(P)$ satisfies the same 
conditions. Thus, we can recursively define $P_{j+1}:={\rm up}(P_j)$ as long as there are up steps in $P_j$. 

We claim that applying the rule to $P_j$ corresponds to applying $F_j$ to $v_j$. To make this precise, 
we introduce some notation. Let $r$ be the collection of columns of $B=B^{r_N,1}\otimes \cdots \otimes B^{r_1,1}$, i.e.,
the tuple $(r_N,\ldots,r_1)$. Given a word $w$ of length $r_1+\cdots+r_N$, we define a rearrangement of it 
${\rm ord}_r(w)$ by slicing $w$ into segments of length $r_N,\ldots,r_1$ in this order, and by 
reordering each segment decreasingly. We will show below that, for all $j=0,\ldots, s$, we have
\begin{equation}\label{words-agree}
{\rm ord}_r({\rm word}(P_{j}))={\rm word}(v_{j})\,.
\end{equation}
We will see that the reordering of the segments of ${\rm word}(P_{j})$ done by ${\rm ord}_r$, if any, is 
very simple: a segment is a concatenation of two decreasing segments which can be swapped to give 
a decreasing sequence. Based on the above discussion, one can see that $\lambda_j$ is the weight of 
$v_j$, and that this element is of highest weight in its classical (non-affine) component.
  
\begin{example}\label{expaths}{\rm The collections of paths $P_j$ corresponding to 
Example~\ref{example.paths} is shown in Figure~\ref{figure.paths}.
\begin{figure}
\begin{equation*}
\begin{split}
v_0 &= \tableau{2} \otimes \tableau{2\\ \overline{2}} \otimes \tableau{\overline{3}\\ \overline{2}} \otimes
\tableau{1\\ 2\\ 3}
\qquad 
\raisebox{-1.5cm}{
\begin{picture}(200,100)(0,0)
	\put(180,20){\line(-1,1){60}}
	\put(120,80){\line(-1,-1){40}}
	\put(80,40){\line(-1,1){20}}
	\put(60,60){\line(-1,-1){20}}
	\put(40,40){\line(-1,1){20}}
	\put(175,30){1} \put(155,50){2} \put(135,70){3}
	\put(100,70){$\overline{3}$} 
	\put(86,55){$\overline{2}$} \put(67,55){2} 
	\put(46,55){$\overline{2}$} \put(27,55){2} 
\end{picture}
}
\qquad
\begin{array}{l} \text{weight of path}\\ \tableau{\mbox{}\\ \mbox{}} \end{array}\\
v_1 &= \tableau{3} \otimes \tableau{1\\ 2} \otimes \tableau{\overline{3}\\ \overline{2}} \otimes
\tableau{1\\ 2\\ 3}
\qquad 
\raisebox{-1.5cm}{
\begin{picture}(200,100)(0,0)
	\put(180,20){\line(-1,1){60}}
	\put(120,80){\line(-1,-1){40}}
	\put(80,40){\line(-1,1){20}}
	\put(60,60){\line(-1,0){20}}
	\put(40,60){\line(-1,1){20}}
	\put(60,20){\line(-1,1){20}}
	\put(175,30){1} \put(155,50){2} \put(135,70){3}
	\put(100,70){$\overline{3}$} 
	\put(86,55){$\overline{2}$} \put(67,55){2} \put(35,70){3}
	\put(47,35){1}
\end{picture}
}
\qquad
\tableau{\mbox{}&\mbox{}\\ \mbox{} \\ \mbox{}} \\
v_2 &= \tableau{1} \otimes \tableau{2\\ 3} \otimes \tableau{1\\ \overline{3}} \otimes
\tableau{1\\ 2\\ 3}
\qquad 
\raisebox{-1.5cm}{
\begin{picture}(200,100)(0,0)
	\put(180,20){\line(-1,1){60}}
	\put(120,80){\line(-1,-1){20}}
	\put(100,60){\line(-1,0){20}}
	\put(80,60){\line(-1,1){20}}
	\put(100,20){\line(-1,1){20}}
	\put(80,40){\line(-1,0){20}}
	\put(60,40){\line(-1,1){20}}
	\put(40,20){\line(-1,1){20}}
	\put(175,30){1} \put(155,50){2} \put(135,70){3}
	\put(100,70){$\overline{3}$} \put(74,70){3} 
	\put(95,30){1} \put(55,50){2} 
	\put(35,30){1}
\end{picture}
}
\qquad
\tableau{\mbox{}&\mbox{}&\mbox{}\\ \mbox{}&\mbox{}\\ \mbox{}}\\
v_3 &= \tableau{2} \otimes \tableau{1\\ 3} \otimes \tableau{1\\ 2} \otimes
\tableau{1\\ 2\\ 3}
\qquad 
\raisebox{-1.5cm}{
\begin{picture}(200,100)(0,0)
	\put(180,20){\line(-1,1){60}}
	\put(120,20){\line(-1,1){40}}
	\put(80,60){\line(-1,0){20}}
	\put(60,60){\line(-1,1){20}}
	\put(80,20){\line(-1,1){20}}
	\put(60,40){\line(-1,0){20}}
	\put(40,40){\line(-1,1){20}}
	\put(175,30){1} \put(155,50){2} \put(135,70){3}
	\put(115,30){1} \put(95,50){2} \put(55,70){3}
	\put(75,30){1} \put(35,50){2}
\end{picture}
}
\qquad
\tableau{\mbox{}&\mbox{}&\mbox{}\\ \mbox{}&\mbox{}& \mbox{} \\ \mbox{}&\mbox{}}
\end{split}
\end{equation*}
\caption{Paths for Example~\ref{expaths}
\label{figure.paths}}
\end{figure}
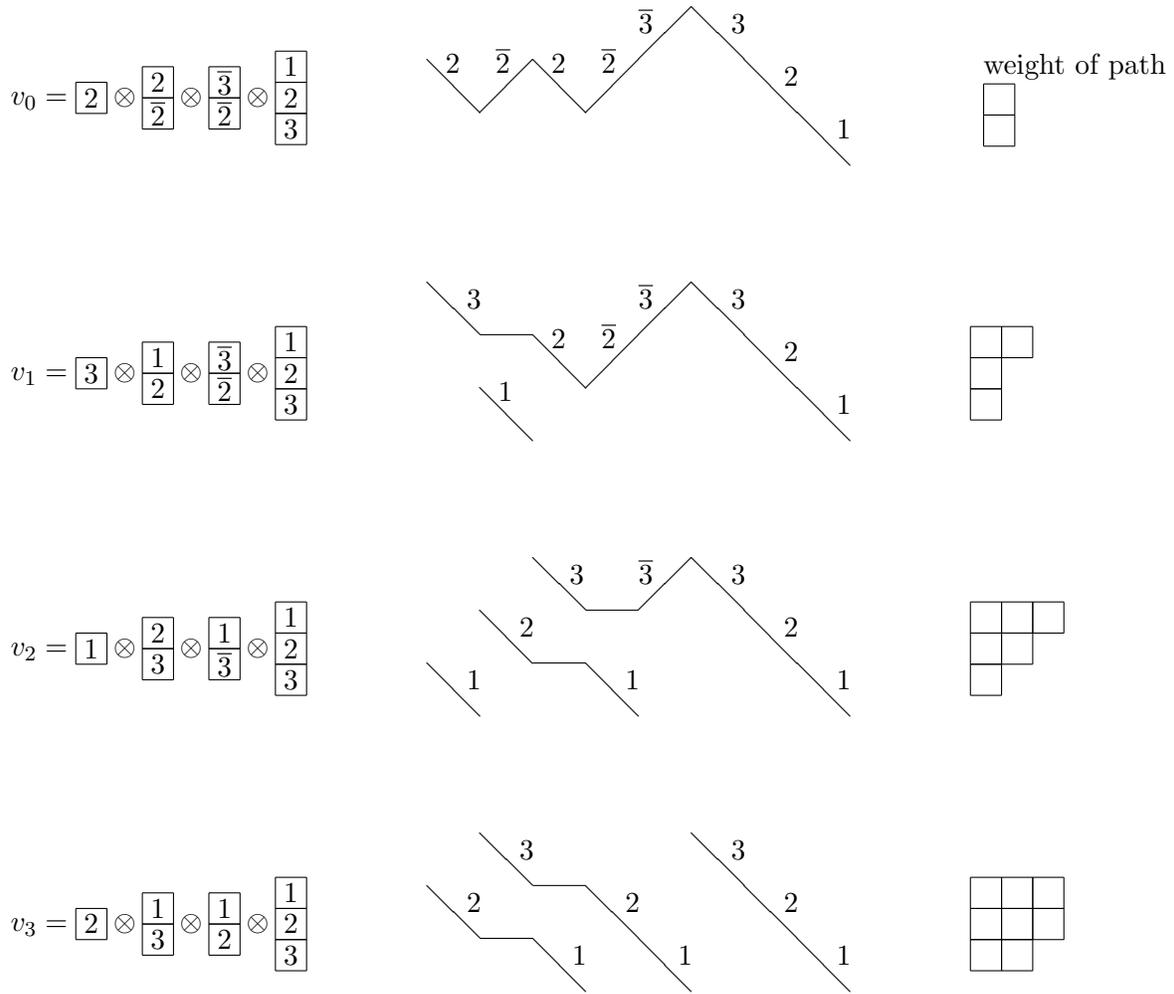

}
\end{example}

\begin{proof}[Proof of Theorem~{\rm \ref{theorem.ground2typeA}}]
It essentially suffices to show that $F_j(v_j)$ is defined if and only if ${\rm up}(P_j)$ is defined and that 
\eqref{words-agree} holds, for all $j$ (the only extra fact to check is that all arrows corresponding to $F_j$ 
are Demazure arrows, see below). Indeed, Rule~\ref{uprule} can be applied as long as there are up steps 
in $P_j$, so the final element $v_s$ will have only positive entries.  
We will first show that Rule~\ref{uprule} precisely describes the action of $F_j$ on the word of $P_j$, viewed 
as an element of the tensor product $(B^{1,1})^{\otimes (r_1+\cdots+r_N)}$; more precisely, we have
\begin{equation}\label{f-on-paths}
F_j({\rm word}(P_j))={\rm word}(P_{j+1})\,.
\end{equation}
We will then show that the above action of $F_j$ is completely similar to that on $v_j$, which will 
prove~\eqref{words-agree}.

Let us describe the action of $F_j$ on ${\rm word}(P_j)$. First note that the connected components of the 
paths in $P_j$ correspond to bracketed units in the tensor product $(B^{1,1})^{\otimes (r_1+\cdots+r_N)}$
according to the signature rule for the application of the Kashiwara operator on tensor products of crystals.
This means that any given operator $f_i$ can only operate on the starting points of the connected paths in
$P_j$. The application of $f_n^k$ lifts all starting points of value $n$ to $\overline{n}$.
Note that by the weight $\lambda_j=\lambda(k,h_2,h_1)$ of $v_j$ there are precisely $k$ of them. Next $f_{n-1}^{2k}$ lifts all $k$
just created $\overline{n}$ to $\overline{n-1}$ and the $k$ letters $n-1$ to the left of the leftmost up-step to $n$ that were previously
bracketed with the just lifted $n$ etc.. Potentially there are two more connected path components with starting points
at $h_1$ and $h_2$, hence the exponents of $f_i$ with $i\le h_1$ and $i\le h_2$ are increased accordingly. 
In summary, the starting points $n$ of path components are eventually lifted to $\overline{1}$ and then turned into $1$ by
$f_0$; the other positive letters to the left of the leftmost up step are all raised by one. In the rightmost path, this lifting
process eventually reaches the leftmost up step $\overline{y}$ which is lifted to $\overline{y-1}$ by $f_{y-1}$.
The steps to the right of the leftmost up step $\overline{y}$ are either $\overline{y+1}$ or $y$. Both would need an $f_y$ to be 
lifted. Since the $f_i$ in $F_j$ are applied in decreasing order of the indices $i$, this cannot happen and hence the leftmost up step
is lifted to $\overline{1}$ and then turned into $1$ by $f_0$. Altogether, the changes are precisely as described in
Rule~\ref{uprule}.

Let us now compare the action of $F_j$ on ${\rm word}(P_j)$, i.e., on $(B^{1,1})^{\otimes (r_1+\cdots+r_N)}$, 
with that on $v_j$, i.e., on $B=B^{r_N,1} \otimes \cdots \otimes B^{r_1,1}$. Note that by the action of $f_0$ on columns
rather than tensor products of single boxes, the word ${\rm word}(P_j)$ of a path is a concatenation of
cyclically shifted columns of $v_j$. We claim that $F_j$ acts on 
precisely the same entries in the two cases. In the expression for $F_j$ the $f_i$ with larger
indices $i$ act first and hence lift up the upper or barred portion of the paths. This is the case before
or after reordering. $f_0$ changes a $\bar{1}$ into a $1$ in both cases. Hence the action commutes
with the reordering.

Finally, the rightmost tensor factor in $v_0$ is the column $c=r_1\cdots 1\in B^{r_1,1}$. Note that this
column is never changed during the algorithm and satisfies $\varepsilon_0(c)=1$. By the tensor product
rules this implies that $\varepsilon_0(v_j)\ge 1$ for all $0\le j<s$, so that all arrows are Demazure arrows.
\end{proof}

There is a more direct way of constructing the type $A$ elements $b$ in 
Theorem~\ref{theorem.ground2typeA} from the ground state $u\otimes u_{\Lambda_0}$. For $i=1,2,\ldots,N$, 
place the letter $i$ in column 1 for each unbarred letter in $u_i$ and in column 2 for each barred letter, 
where $u=u_N \otimes \cdots \otimes u_1$. Note that, due to the fact that $u$ is a ground state, the difference
in height between the first and second column is at most $n$. Now cut the columns at heights 
$n, 2n, 3n,\ldots$ and put all pieces next to each other aligned at height 0. The letters in row $i$ record 
the tensor factors of $b$ in Theorem~\ref{theorem.ground2typeA} which contain the letter $i$. Comparing 
this with the algorithm in the proof of Theorem~\ref{theorem.ground2typeA}, it is not hard to see that it gives 
the correct answer.

\begin{example}
{\rm
Continuing Examples~\ref{example.paths} and \ref{expaths}, the chosen ground state yields the columns
\[
\tableau{{4}\\ {3} \\ {1}&{3}\\ {1}&{2}\\ {1}&{2}}
	\qquad \text{and, after the cut,} \qquad
	\tableau{{1}&{3}\\ {1}&{2}&{4}\\ {1}&{2}&{3}} \; .
\]
This tells us that the letter 1 appears in $b_1$, $b_2$, $b_3$, the letter 2 appears in $b_1$, $b_2$, $b_4$,
and the letter 3 appear in $b_1$ and $b_3$. This agrees with Figure~\ref{figure.paths}.
}
\end{example}

\section{Energy and charge in type $C$}
\label{section.energyC}

The purpose of this section is to provide the proof of Theorem~\ref{theorem.main} for type $C$.

\subsection{The energy for type $A$ fillings}\label{energya}

We start with a brief discussion of the combinatorial $R$-matrix $B_1\otimes B_2\rightarrow B_2\otimes B_1$
for a tensor product of two type $C$ columns. By the type $C$ Pieri rule \cite{Su:1990}, the decomposition of 
$B_1\otimes B_2$ into classical (non-affine) components is multiplicity free. On the other hand, it was proved
in~\cite{lecsc} that the type $C$ jeu de taquin due to Sheats \cite{shesjt} is compatible with the classical 
crystal operators. We conclude that the mentioned jeu de taquin, when applied to our two-column situation, 
realizes the corresponding combinatorial $R$-matrix. Note that, in all situations considered below, the type 
$C$ jeu de taquin is governed by the simpler rules of the classical one, in type $A$ (see, e.g., \cite{fulyt}). 

\begin{lemma}\label{localac}
Let $b=b_1\otimes b_2$ be a tensor product of two columns with all entries in $[n]$, where the height 
of $b_2$ is at most that of $b_1$. Then the type $A_{n-1}^{(1)}$ and $C_n^{(1)}$ (local) energies of $b$ coincide.
\end{lemma}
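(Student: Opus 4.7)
My plan reduces Lemma~\ref{localac} to a verification on a small set of representatives, using three structural observations.

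First, by the preamble to the lemma, Sheats' type $C$ jeu de taquin~\cite{shesjt} realizes the type $C$ combinatorial $R$-matrix by \cite{lecsc}, and on tensor products of two columns with entries in $[n]$ and $\mathrm{height}(b_2) \le \mathrm{height}(b_1)$ it coincides with the classical (type $A$) jeu de taquin. Hence $\sigma^A$ and $\sigma^C$ agree on the set $X$ of elements covered by the lemma, so the $\sigma$-dependent part of the defining recursion~\eqref{eq:local energy} is type-independent on $X$. Second, both $H^A$ and $H^C$ vanish at the normalizing element $v_1 \otimes v_2 \in X$, and are preserved by the classical Kashiwara operators $e_i, f_i$ for $i \in \{1,\ldots,n-1\}$, which act identically in types $A$ and $C$ on $X$. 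Thus $H^A = H^C$ on the classical type $A$ component of $v_1 \otimes v_2$, namely $B_A(\omega_{r_1}+\omega_{r_2})$, and the problem reduces to the other type $A$ components $B_A(\omega_{r_1+k}+\omega_{r_2-k})$ with $k\ge 1$ appearing in the type $A$ Pieri decomposition.

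For each such $k$, I would select the classical type $A$ highest weight element $b^{(k)}$ and compute $H(b^{(k)})$ by tracing an explicit path from $v_1 \otimes v_2$ via \eqref{eq:local energy}. In type $A$, the path uses $e_0^A$ (changing $1\mapsto n$, staying within $X$) interspersed with classical operators; each $e_0^A$ contributes $\pm 1$ to $H^A$ according to its LL/RR classification. In type $C$, the analogous path uses $e_0^C$ (changing $1\mapsto\bar 1$, so intermediate states temporarily leave $X$) followed by classical operators including $e_n$ that eventually return to $b^{(k)}$; since classical operators do not affect $H^C$, the net change comes entirely from the $e_0^C$ steps.

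The main technical obstacle will be matching the LL/RR signs between the $e_0^A$ and $e_0^C$ applications along these paths. The conditions differ a priori, because $\varphi_0^A(c)=1$ iff $n\in c$, whereas $\varphi_0^C(c)=1$ iff $\bar 1\in c$, which vanishes on type $A$ columns; consequently, $e_0^C$ acts on the left tensor factor whenever $b_1$ contains~$1$, while $e_0^A$ requires additionally that $b_2$ not contain $n$. I plan to reconcile this either by a careful inductive comparison of the two paths (on $k$ and on the configuration of $1$'s and $n$'s), or by establishing a closed combinatorial formula such as $H(b_1\otimes b_2) = -\#\{i : b_1(i) > b_2(i),\, 1\le i\le \mathrm{height}(b_2)\}$, known for $H^A$, and verifying that $H^C$ on type $A$ fillings satisfies the same formula by directly tracking the recursion.
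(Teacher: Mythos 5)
Your reduction is sound and matches the paper's setup: the $R$-matrix is realized by jeu de taquin in both types and these agree on type $A$ fillings, $H$ is constant on classical components, and each type $A$ component $B_A\bigl((h_1+i,h_2-i)'\bigr)$ sits inside a single classical type $C$ component, so everything comes down to computing the value of $H^C$ on the $i$-th component and checking it equals the known type $A$ value $-i$. But that computation is exactly where your proposal stops: you name the LL/RR sign-matching as ``the main technical obstacle'' and offer two ways out without executing either, so the proof is incomplete at its essential step. Moreover, the fallback closed formula $H(b_1\otimes b_2)=-\#\{i: b_1(i)>b_2(i)\}$ is false: for $n=3$, $b_1\otimes b_2 = 23\otimes 12$ lies in the component $B(\omega_1+\omega_3)$ of energy $-1$, yet the row-wise comparison counts $2$. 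The correct descent count requires the circular reordering of Algorithm~\ref{algreord} first, so this route would not close the gap as written.

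The paper avoids the path comparison altogether. For each $i\ge 1$ it exhibits one explicit element $b_{1i}\otimes b_{2i}$ of the $i$-th component (with a $1$ in $b_{2i}$ and no $1$ in $b_{1i}$), computes its $R$-matrix image by hand, observes that the type $C$ operator $e_0$ (sending $1\mapsto\overline 1$) acts on the right factor on both sides of the $R$-matrix --- hence RR, contributing $+1$ to $H^C$ --- and checks via insertion that $e_0(b_{1i}\otimes b_{2i})$ lands in the $(i-1)$-st classical component. Since $H^C$ is constant on classical components, this single step per component gives $H^C=-i$ on the $i$-th component by induction from the normalization $H^C=0$ on the $0$-th, with no need to reconcile type $A$ and type $C$ zero-arrows along parallel paths (and no harm in the intermediate element leaving the set of type $A$ fillings). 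If you want to salvage your approach, this is the form the ``careful comparison'' must take: compute $H^C$ intrinsically, one well-chosen $e_0$ per component, and only then compare with the type $A$ answer.
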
 

\begin{proof}
Consider the crystal $B^{h_1,1}\otimes B^{h_2,1}$ of type $C_n^{(1)}$, where $n\ge h_1\ge h_2\ge 1$. By the
type $C$ Pieri rule, the classical components of this crystal containing fillings with all entries in $[n]$ have 
highest weights $\mu_i=(h_1+i,h_2-i)'$, for $i=0,\ldots,\min(h_2,n-h_1)$. We will calculate the (local) energy 
on these components based on its definition (\ref{eq:local energy}), by setting the energy to $0$ on the
component of highest weight $\mu_0$ (the usual normalization). 

For $i=1,\ldots,\min(h_2,n-h_1)$, consider $b_{1i}\otimes b_{2i}$ in $B^{h_1,1}\otimes B^{h_2,1}$, where
\[b_{1i}=\begin{array}{|c|}\hline n-h_1+1 \\ \hline  \vdots \\ \hline n 
\\ \hline \end{array}\,,\qquad
b_{2i}=\begin{array}{|c|}\hline  
1\\ \hline n-h_1-i+2 \\ \hline  \vdots \\ \hline n-h_1 \\ \hline n-h_2+1 \\ \hline \vdots \\ \hline n-i \\ \hline \end{array}\,.\]
The image $b_{2i}'\otimes b_{1i}'$ of $b_{1i}\otimes b_{2i}$ under the combinatorial $R$-matrix, constructed with jeu de taquin, is given by 
\[b_{2i}'=\begin{array}{|c|}\hline n-h_2+1 \\ \hline  \vdots \\ \hline n 
\\ \hline \end{array}\,,\qquad
b_{1i}'=\begin{array}{|c|}\hline  
1\\ \hline n-h_1-i+2 \\ \hline  \vdots \\ \hline  n-i \\ \hline \end{array}\,.\]
Clearly, $e_0$ acts RR on $b_{1i}\otimes b_{2i}$ and $b_{2i}'\otimes b_{1i}'$, by changing $1$ to 
$\overline{1}$. By type $C$ insertion \cite{lecsc}, which in this case is essentially just type $A$ insertion 
(see, e.g., \cite{fulyt}), it is easy to see that $b_{1i}\otimes b_{2i}$ and $e_0(b_{1i}\otimes b_{2i})$ lie in 
the components of highest weights $\mu_i$ and $\mu_{i-1}$, respectively. Therefore, the energy on the component of highest weight $\mu_i$ is $-i$. This coincides with the type $A$ energy (calculated via a 
similar procedure or via the type $A$ charge). 
\end{proof}

\begin{proposition}\label{chc2}
If $b$ is a tensor product of columns with all entries in $[n]$, then the type $A_{n-1}$ and $C_n$ energies 
of $b$ coincide. Furthermore, if the columns have weakly decreasing heights, they equal the 
(type $A_{n-1}$ or $C_n$) charge of $b$.
\end{proposition}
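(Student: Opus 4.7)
The plan is to reduce the type $C$ statement to the corresponding type $A$ results, using Lemma~\ref{localac} as the two-column building block.

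For the first claim, I would argue term by term in the sum defining $D^L$ (or $D^R$). Each term has the form $H_{j-1}\sigma_{j-2}\cdots\sigma_i$, that is, a local energy computed after bringing the $i$-th tensor factor next to the $(j-1)$-st one via combinatorial $R$-matrices. The key observation is that the type $C$ combinatorial $R$-matrix, when applied to two columns with all entries in $[n]$, coincides with the type $A$ one: as recalled at the start of Section~\ref{energya}, it is realized by Sheats' type $C$ jeu de taquin, which reduces to ordinary type $A$ jeu de taquin in the absence of barred entries. In particular, $\sigma$ never introduces barred letters, so iterating it keeps us within the subset of fillings with entries in $[n]$. Lemma~\ref{localac} then implies that every local energy contribution matches in types $A$ and $C$, giving the desired equality $D_C(b)=D_A(b)$.

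For the second claim, assuming additionally that the column heights are weakly decreasing, I would invoke the main theorem in type $A$ (already established in Section~\ref{section.energyA}) to get $D_A(b)=-\charge(b)$ for the type $A$ charge. It remains to show that the type $A$ and type $C$ charges coincide on such $b$. Since no column $b_j$ contains a pair $(z,\overline{z})$, Definition~\ref{defkn} forces $b^L_j=b^R_j=b_j$, so the type $C$ filling $c=\mbox{circ-ord}(b)$ is just the type $A$ filling with each column duplicated. The type $C$ descents, which can only occur between $c^R_j$ and $c^L_{j+1}$, are then in natural bijection with the type $A$ descents between $c_j$ and $c_{j+1}$; the arm length in the doubled shape $2\mu$ is exactly twice the one in $\mu$, so the factor $\tfrac{1}{2}$ in~\eqref{chdesc} cancels the doubling, yielding the equality of charges.

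The main point to verify carefully is that Sheats' type $C$ jeu de taquin on a two-column filling with unbarred entries never creates barred letters, so that Lemma~\ref{localac} can be iterated through the sum defining $D^L$. This is essentially the fact that the sliding rules only permute existing entries; nevertheless it is the one place where the type $C$ specifics enter, and it should be isolated as a clean preliminary statement before the term-by-term comparison.
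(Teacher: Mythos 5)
Your argument is correct and follows essentially the same route as the paper's proof: identify the type $A$ and type $C$ combinatorial $R$-matrices (via jeu de taquin) on unbarred fillings, apply Lemma~\ref{localac} to match the local energies, and then compare the two charges directly, noting that trivial splitting doubles the arm lengths and cancels the factor $\tfrac{1}{2}$ in~\eqref{chdesc}. Your write-up merely spells out the details the paper leaves implicit.
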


\begin{proof} When all entries are in $[n]$, the jeu de taquin algorithms in types $A_{n-1}$ and $C_n$ 
(realizing the corresponding combinatorial $R$-matrices) work identically. Since the corresponding local 
energies coincide by Lemma~\ref{localac}, the global energies coincide as well. But the type $A_{n-1}$ 
energy of $b$ is computed by the type $A_{n-1}$ charge when the heights of the columns are weakly
decreasing by Theorem~\ref{theorem.main} which was proven in Section~\ref{section.energyA}, 
which clearly coincides with the corresponding type $C_n$ charge.
\end{proof}

\subsection{The conclusion of the proof}\label{endproof}

We start by studying the behavior of the type $C$ charge with respect to the crystal operators and state a result in \cite{lalgam}.

\begin{proposition}\label{chcf1n}\cite{lalgam}
The type $C_n$ charge is preserved by the crystal operators $f_1,\ldots,f_{n}$.
\end{proposition}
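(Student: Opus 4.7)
My plan is to follow the same strategy as in the proof of Proposition \ref{chaf1n} for type $A$: first express the effect of $f_i$ on the charge word $\cw_2(b)$ as a local transformation, and then verify that this transformation preserves the output of the charge algorithm described in Section \ref{constrc}.

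For the first step, given $b = b_1 \otimes \cdots \otimes b_{\mu_1}$ in $B_\mu$, the operator $f_i$ with $1 \leq i \leq n-1$ acts by the signature rule on the restricted biword formed by the biletters with top entries in $\{i, i+1\}$ together with those in $\{\overline{i+1}, \overline{i}\}$; by the signature rule either a single $i$ becomes $i+1$, or a single $\overline{i+1}$ becomes $\overline{i}$, in at most one column $b_j$. Adapting the ``double crystal graph'' viewpoint of \cite{lasdcg} to KN columns, this amounts to a single jeu de taquin slide on a two-column skew shape over the restricted alphabet $\{i, i+1\}$ or $\{\overline{i+1}, \overline{i}\}$, which takes place over a subalphabet on which type $C$ insertion agrees with type $A$ insertion (compare Section \ref{energya}). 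For the short-root case $i = n$, the operator $f_n$ simply changes a single $n$ into $\overline{n}$ in some column $b_j$, which in $\cw(b)$ amounts to moving the biletter $\binom{n}{j}$ into the $\overline{n}$-block while its column index $j$ is preserved.

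For the second step, I would show that each of these local moves is invisible to the charge algorithm. In the cases $i < n$, the subword involved lies entirely in a single scanning block of the algorithm (either the unbarred block $\{i, i+1\}$ or the barred block $\{\overline{i+1}, \overline{i}\}$); consequently the selections made in any iteration before reaching that block are unchanged, those made within the block are governed by the same Knuth-type relation that preserves classical charge (cf.\ \cite[Lemma 6.6.6]{lltpm}), and subsequent selections depend only on what is left after removing the selected entries, which is the same multiset on both sides. For $i = n$, I would track directly how the transition between the $n$-block and the $\overline{n}$-block in the scanning order $1 < 1' < 2 < 2' < \cdots$ is affected by the swap; the bracketing that the signature rule for $f_n$ uses matches the pairing implicit in the charge algorithm at this transition, so the counts of rightward steps in each iteration remain the same.

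The main obstacle I anticipate is the case $i = n$, because the type $C$ charge algorithm treats unbarred and barred letters asymmetrically: one always moves left from $j$ to $j'$ but may move right from $j'$ to $j+1$, so the invariance under $n \leftrightarrow \overline{n}$ needs a careful case analysis on where in the circular scan the affected biletter is selected. The other potentially subtle point is that Lecouvey's symplectic plactic monoid \cite{leckfp} contains non-local contraction relations involving $(z, \overline{z})$ pairs; however these do not appear here since the moves induced by $f_i$ are confined to a two-letter subalphabet and never create or destroy such a pair, so the proof can bypass the full plactic machinery and reduces to the elementary verifications sketched above.
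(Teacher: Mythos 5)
The paper itself contains no proof of this proposition: it is imported from \cite{lalgam}, where it is established in the quantum alcove model (crystal operators are defined directly on the chains of Weyl group elements underlying the charge, the relevant statistic is checked to be invariant there, and the result is carried over to KN columns via the bijection of \cite{lenmpc}). Your plan of imitating the type $A$ proof of Proposition~\ref{chaf1n} on the charge word is therefore a genuinely different route, but it has a gap I do not see how to repair as stated. The type $C$ charge word $\cw(b)$ is built from the \emph{split} columns $b_j^Lb_j^R$ of Definition~\ref{defkn}, and your key claim --- that the moves induced by $f_i$ ``never create or destroy'' a pair $(z,\overline{z})$ and hence stay inside a single two-letter subalphabet --- is false. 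Already in $C_2$: the column with entries $1,\overline{2}$ has no pair and splits as $(1\,\overline{2})(1\,\overline{2})$, while $f_1$ sends it to the column $2,\overline{2}$, which acquires the pair $(2,\overline{2})$ and splits as $(1\,\overline{2})(2\,\overline{1})$. The right split column changes in \emph{two} places, one unbarred ($1\to 2$) and one barred ($\overline{2}\to\overline{1}$), so the induced transformation of $\cw(b)$ is not a single slide confined to the block $\{i,i+1\}$ or to the block $\{\overline{i+1},\overline{i}\}$, but a coordinated change in two widely separated blocks of the biword; and when a created or destroyed pair forces a re-choice of the letters $t_i$ in Definition~\ref{defkn}, entries outside $\{i,i+1,\overline{i+1},\overline{i}\}$ can move as well.

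Even where the local description is correct, your second step is not secured. The type $C$ selection algorithm picks column labels $1,1',2,2',\dots$ while scanning the top letters in the order $\overline{1},\overline{2},\dots,\overline{n},n,\dots,1$, and there is no invariance statement for this statistic under Knuth-type moves available to quote: \cite[Lemma 6.6.6]{lltpm} concerns only the Lascoux--Sch\"utzenberger charge of a type $A$ word. Indeed, the introduction of this paper stresses that the symplectic plactic relations ``are the main cause of the complications in type $C$'' in Lecouvey's work, which is exactly why the authors do not take this route. To salvage your approach you would need (a) a complete case analysis of how $f_i$ transforms the split filling --- including the pair-creating and pair-destroying cases and the short-root case $i=n$ --- and (b) a direct verification that the descent set of $\mbox{circ-ord}(b)$ and the arm-length sum in~\eqref{chdesc} are unchanged in each case, i.e., an argument of the same kind as the one given for $e_0$ in the proof of Proposition~\ref{chce0}, carried out for every $i$.
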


\begin{proposition}\label{chce0} 
Let $B=B^{r_N,1}\otimes \cdots \otimes B^{r_1,1}$ be of type $C_n$ with $r_N\ge r_{N-1} \ge \cdots \ge
r_1$ and $b\in B$.
If $\varphi_0(b)\ge 1$ and $\varepsilon_0(b)\ge 1$, then the type $C_n$ charge satisfies 
\[\charge(e_0(b))=\charge(b)-1\,.\]
\end{proposition}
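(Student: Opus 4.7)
The plan is to adapt the case analysis from Proposition \ref{chae0} to the split-column setting of type $C$. I would proceed in three steps.

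First, I verify that $e_0$ acts on split columns in the simplest possible way. Since $b_j$ contains $1$, the KN condition of Definition \ref{definition.KN_column} forbids $\overline{1} \in b_j$, so $1$ is not part of any pair $(z,\overline{z})$ in $b_j$. After the move, $\overline{1} \in b_j$ and $1 \notin b_j$, so no new pair is formed; hence the set $I$ of paired unbarred letters is unchanged. The set $J$ of replacement letters is also unchanged: even though $t_i = 1$ could become newly admissible once $1 \notin b_j$, the requirement $\overline{t_i} \notin b_j$ fails for $t_i=1$ because $\overline{1}$ is now in $b_j$. Consequently, both $b^L_j$ and $b^R_j$ transform in exactly the same manner: the $1$ at the top of each is replaced by $\overline{1}$ at the bottom.

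Next, I would compare $c := \mbox{circ-ord}(b)$ with $d := \mbox{circ-ord}(e_0(b))$ cell by cell, viewing both as fillings of shape $2\mu$, and use the observation from Section \ref{constrc} that all descents in a $\mbox{circ-ord}$ filling are of the form $c^R_j(i) > c^L_{j+1}(i)$. The condition $\varphi_0(b) \ge 1$, combined with the tensor product signature rule, forces $j > 1$. In the main case, where neither $c^R_{j-1}$ has a $1$ in the row of the changed entry nor $c^L_{j+1},c^R_{j+1}$ contain $\overline{1}$ in a sufficiently high row, the filling $d$ agrees with $c$ except that the two $1$'s in $c^L_j$ and $c^R_j$ become $\overline{1}$'s in new rows (determined by the relevant circular orders $\prec_i$). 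Tracking the descents, one finds that exactly two of them shift --- one between $c^R_{j-1}$ and $c^L_j$, and one between $c^R_j$ and $c^L_{j+1}$ --- and each affected arm length decreases by $1$. By \eqref{chdesc}, the charge decreases by $1$.

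Finally, the exceptional cases are handled as in type $A$. Exception 1 (leftward propagation, where $c^R_{j-1}$ has a $1$ in the relevant row) is ruled out by iterating the KN/split-column conditions together with $\varphi_0(b) \ge 1$, which would otherwise extend the propagation indefinitely to the left. Exception 2 (rightward cascade, where some $c^L_{j+1}$ contains $\overline{1}$ in a row $\ge i$) is resolved by a cascading analysis over the columns $c^L_{j+1}, c^R_{j+1}, \ldots, c^L_{j+p}, c^R_{j+p}$ that directly mirrors Exception 2.2 of the type $A$ proof and yields the same net arm-length change of $-2$. The main obstacle will be the bookkeeping in these exceptional cases: since each column is doubled into $L$ and $R$ halves, one must verify that the cascading interactions across both halves produce the same net change in the total arm length as in type $A$, without creating or destroying descents in ways that would upset the count.
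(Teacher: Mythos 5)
Your overall framework (compare $c=\mbox{circ-ord}(b)$ with $d=\mbox{circ-ord}(e_0(b))$, track descents, conclude via \eqref{chdesc}) is the right one, and your preliminary observation that splitting commutes with $e_0$ --- both $b_j^L$ and $b_j^R$ simply trade their $1$ for a $\overline{1}$ because $I$ and $J$ are unchanged --- is correct and worth making explicit. But the proposal has a real gap where you defer to ``exceptional cases handled as in type $A$.'' The point you are missing is that \emph{there are no exceptional cases in type $C$}, and the reason is structural, not a cascade to be tamed: by Definition~\ref{definition.KN_column} a KN column never contains both $1$ and $\overline{1}$, so the pairing mechanism that drives Exceptions 1 and 2 in the type $A$ proof (where a single column may contain both $1$ and $n$) has no analogue. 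Concretely, the signature rule for $e_0$ forces $1\notin b_{j-1}$ (otherwise that $1$, which cannot be cancelled inside its own column, would be the leftmost unpaired $1$) and $\overline{1}\notin b_{j+1}$ (otherwise the $1$ in $b_j$ would pair with it and $e_0$ could not act there). Since $b^R$ only alters barred letters and $b^L$ only unbarred ones, this gives $1\notin b_{j-1}^R$ and $\overline{1}\notin b_{j+1}^L$ outright, which is exactly the hypothesis of your ``main case.'' Your planned ``cascading analysis mirroring Exception 2.2,'' asserted to yield a net change of $-2$, is an analysis of an empty case; as written it is an unverified claim about the part you yourself identify as the main obstacle, so the proof is incomplete until you either carry it out or (better) observe it is vacuous.

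Two smaller inaccuracies in your main case. First, the $\overline{1}$'s do not land in ``new rows'': one checks from Algorithm~\ref{algreord} that $d_j^L(i)=d_j^R(i)=\overline{1}$ in the \emph{same} row $i$ where $c_j^L(i)=c_j^R(i)=1$ (since $\overline{1}$ immediately precedes $1$ in every circular order $\prec_m$ with $m\ne 1$, and $c_{j-1}^R(i)\ne 1$ by the above), and that all other entries of $d$ agree with $c$; this needs the two non-containment facts above. Second, the descent accounting is not ``two descents each losing arm length $1$'': the descent $c_{j-1}^R(i)>c_j^L(i)=1$ disappears (as $\overline{1}$ is maximal) and a new descent $d_j^R(i)=\overline{1}>d_{j+1}^L(i)$ appears two cells to its right (using $d_{j+1}^L(i)\ne\overline{1}$), so the total arm length drops by $2$ and the factor $\tfrac12$ in \eqref{chdesc} gives $\charge(e_0(b))=\charge(b)-1$. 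The numerical outcome of your bookkeeping is the same, but the mechanism you describe is not what happens.
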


\begin{proof}
Let $b=b_1\ldots b_{\mu_1}$, and note that none of these columns contain both $1$ and $\overline{1}$. 
Assume that $e_0$ changes the entry $1$ in $b_j$ to $\overline{1}$. The condition $\varphi_0(b)\ge 1$ 
implies $j>1$. Note that the column $b_{j-1}$, and thus $b_{j-1}^R$, cannot contain $1$ (otherwise, the 
entry $1$ in $b_j$ would not be the leftmost unpaired $1$). Similarly, the column $b_{j+1}$, and thus 
$b_{j+1}^L$, if they exist, cannot contain $\overline{1}$ (otherwise, the entry $1$ in $b_j$ would be 
paired with the mentioned $\overline{1}$). 

Let $c:=\mbox{circ-ord}(b)$ with $c=c_1^Lc_1^R\ldots c_{\mu_1}^Lc_{\mu_1}^R$, and $d:=\mbox{circ-ord}(e_0(b))$ with 
$d=d_1^Ld_1^R\ldots d_{\mu_1}^Ld_{\mu_1}^R$ (recall that the map $\mbox{circ-ord}$ is defined by a slight modification 
of Algorithm \ref{algreord}). Assume that $c_j^L(i)=1$, which implies $c_j^R(i)=1$ (as we have no descents 
in the pair $c_j^Lc_j^R$; see Section \ref{constrc}). We claim that $d_l^L(k)=c_l^L(k)$ and $d_l^R(k)=c_l^R(k)$ 
for all $k,l$ with the exception of $d_j^L(i)=d_j^R(i)=\overline{1}$. Indeed, all we need to check are the following, 
in this order: (1) the entries $d_j^L(k)$ and $d_j^R(k)$ for $k=1,\ldots,i-1$ are as claimed, since the alternative,
namely that one of them is $\overline{1}$, would lead to a contradiction; (2) the value of $d_j^L(i)$ follows from 
the fact that $b_{j-1}^R$ contains no $1$; (3) the value of $d_{j+1}^L(i)$, if this entry exists, follows from the fact 
that $b_{j+1}^L$ contains no $\overline{1}$. 

By the above facts, we have the same descents in the fillings $c$ and $d$, with the exception of the descent 
$c_{j-1}^R(i)>c_j^L(i)=1$, which corresponds to $d_j^R(i)=\overline{1}>d_{j+1}^L(i)$, assuming that 
$d_{j+1}^L(i)$ exists. The difference between the arm lengths of the mentioned descents is $2$, which 
concludes the proof by~\eqref{chdesc}.
\end{proof}

\begin{proof}[Proof of Theorem {\rm \ref{theorem.main}} in type $C$]
The proof is immediate based on Lemma~\ref{rec-const-energy}, Theorem~\ref{theorem.ground2typeA}, and 
Propositions~\ref{chc2}, \ref{chcf1n}, \ref{chce0}.
\end{proof}

\section{Open problems}
\label{section.outlook} 

In this section we discuss several directions of research stemming from the results in this paper. We intend to 
pursue some of them in the future.

\subsection{Columns of types $B$ and $D$} 
\label{section.columnsBD}
We believe that, in types $B_n$ and $D_n$, the statistic in the Ram--Yip formula for Macdonald polynomials at $t=0$ can also be 
translated into a charge statistic on the corresponding tensor product of KR crystals $B^{k,1}$ of types $B_n^{(1)}$
and $D_n^{(1)}$ (represented with KN columns), 
thus extending the results in~\cite{lenmpc}. We conjecture that the type $B_n$ and $D_n$ charge also agrees with the 
corresponding energy function, which would extend the results in this paper.  It should be possible to use a proof technique similar 
to the one in this paper, based on Lemma~\ref{rec-const-energy} from~\cite{ST:2011}, to prove the conjecture; 
the proof would include the generalization of the results in~\cite{lalgam} to types $B_n$ and $D_n$. These claims are supported 
by~\cite[Corollary 9.5]{ST:2011} expressing a type $D_n$ Macdonald polynomial at $t=0$ in terms of the corresponding 
energy function. 

From one point of view, the case of types $B_n$ and $D_n$ is easier than the one of type $C_n$, because the corresponding 
level 1 KR crystals are perfect. However, the construction of charge in types $B_n$ and $D_n$ displays additional complexity, 
due to some new aspects, that we now describe. We start by referring to type $B_n$, as type $D_n$ has all the complexity 
of type $B_n$ plus an additional one. For the corresponding KN columns, indexing the vertices of the crystals $B(\omega_i)$ 
corresponding to the fundamental representations $V(\omega_i)$, we refer to \cite{kancgr}. 

The first new aspect in type $B_n$ is related to the splitting of the KR crystal $B^{k,1}$ upon removing the 0-arrows as the 
following direct sum of (classical) crystals: $B(\omega_{k})\oplus B(\omega_{k-2})\oplus\cdots$. This phenomenon 
manifests itself in the existence of descents between the left and the right columns of a split KN column, which was not the 
case in type $C_n$. We illustrate this based on the following example.

\begin{example}
{\rm Let $\mu=\omega_4$ in $B_5$. The KR crystal $B^{4,1}$ contains a vertex indexed by the KN column
\[
	b=\tableau{{3}\\{\overline{3}}}\;\in\; B(\omega_2)\subset B^{4,1}\simeq 
	B(\omega_4)\oplus B(\omega_2)\oplus B(\omega_0)\,.
\]
Instead of constructing the split column, we first construct from $b$ the following ``extended'' KN column of height 4, 
in order to make the columns corresponding to the components of $B^{4,1}$ of the same height:
\[
	\widehat{b}=\tableau{{1}\\{3}\\{\overline{3}}\\{\overline{1}}}\,;
\]
this construction is based on a procedure in~\cite[Section 3.4]{schbtd}. Only at this point we construct the split column. 
We claim that the doubling procedure in Definition~\ref{defkn} can be extended, but we still match the entries in $I=\{3>1\}$ 
with certain entries ``preceding'' them; the only difference is that this now requires us to go counterclockwise around the 
circle. So we obtain  $J=\{2,\overline{4}\}$, which gives the following splitting of $\widehat{b}$ by the usual rule:
\[
	\widehat{b}^L \widehat{b}^R=\tableau{{2}&{1}\\{\overline{4}}&{3}\\{\overline{3}}&{4}\\{\overline{1}}&{\overline{2}}}\,.
\]
Finally, we construct $c=c^L c^R=\mbox{circ-ord}(\widehat{b}^L \widehat{b}^R)$ by the usual rule (see Section \ref{constrc}), where 
$c^L=\widehat{b}^L$:
\[
	c=\tableau{{2}&{3}\\{\overline{4}}&{\overline{2}}\\{\overline{3}}&{1}\\{\overline{1}}&{4}}\,.
\]
The energy of $b$ in $B^{4,1}$ is $-1$. This agrees with the charge of $\widehat{b}^L \widehat{b}^R$, computed via the sum 
in~\eqref{chdesc}; indeed, there are two descents in $c$, whose arm lengths are $1$, so 
${\rm charge}(\widehat{b}^L \widehat{b}^R)=\frac{1}{2}(1+1)=1$.
}
\end{example}

The second new aspect in type $B_n$ is the fact that not always a descent in the filling obtained via the procedure 
``$\mbox{circ-ord}$'' contributes half its arm length to the charge. We illustrate this with another example.

\begin{example}\label{exxx}
{\rm Let $\mu=2\omega_2$ in type $B_3$, so $B_\mu=B^{2,1}\otimes B^{2,1}$. Consider the pair of KN columns
\[
	b_1b_2=\tableau{{1}&{1}\\{\overline{2}}&{2}}
\]
in $B_\mu$. The corresponding fillings with split columns $b=b_1^Lb_1^Rb_2^Lb_2^R$ and 
$c=\mbox{circ-ord}(b)=c_1^L c_1^R c_2^L c_2^R$ coincide, and they are
\[
	\tableau{{1}&{1}&{1}&{1}\\{\overline{2}}&{\overline{2}}&{2}&{2}}\,.
\]
The corresponding energy is $-2$, but in $c$ we have only one descent of arm length 2, so we cannot use the sum 
in~\eqref{chdesc} to compute the charge. This problem occurs because, for all the entries $i$ between $\overline{2}$ and $2$ 
in circular order (namely $i\in\{\overline{1},1\}$), either $i$ or $\overline{\imath}$ are above $\overline{2}$ in $c_1^R$. This 
forces the descent $\overline{2}>2$ to be double counted. A more illuminating explanation can be given in the context of 
deriving the charge from the statistic in the Ram--Yip formula; see~\cite[Section 9]{lenmpc}.
}
\end{example}

The above example suggests that in type $B_n$ we need to modify the definition of charge by the sum in~\eqref{chdesc} 
as follows. We use the same notation $c=c_1^L c_1^R \cdots c_{\mu_1}^L c_{\mu_1}^R$ as above. Let ${\rm Des}'(c)$ denote 
the descents of the form $\overline{m}=c^R_j(i)>c^L_{j+1}(i)=m$ such that, for any $k=1,\ldots,m-1$, we have either $k$ or 
$\overline{k}$ in $c^R_j[1,i-1]$. We claim that the appropriate definition of the type $B_n$ charge is given by the following 
formula:
\begin{equation}\label{bcharge}
	\frac{1}{2}\sum_{\gamma\in{\rm Des}(c)\setminus{\rm Des}'(c)}{\rm arm}(\gamma)+
	\sum_{\gamma\in{\rm Des}'(c)}{\rm arm}(\gamma)\,.
\end{equation}

We said above that in type $D_n$ we have additional complexity still. We explain this using another example. 

\begin{example}
{\rm Let $\mu=2\omega_1$ in type $D_3$. 
The filling with split columns $c=b=b_1^L b_1^R b_2^L b_2^R=\tableau{{3}&{3}&{\overline{3}}&{\overline{3}}}$ in 
$B_\mu$ has no descents, but the corresponding energy is $-1$. The reason for this is that the values $3$ and 
$\overline{3}$ are incomparable in type $D_3$, so the pair $\tableau{{3}&{\overline{3}}}$ in $c$ needs to contribute 
$1$ to the charge. In fact, the definition of charge needs to be adjusted even more, as the following more subtle example 
shows.

Let $\mu=2\omega_2$ in $D_4$. 
Consider the following filling with split columns in $B_\mu$:
\[
	c=b=b_1^L b_1^R b_2^L b_2^R
	=\tableau{{3}&{3}&{\overline{4}}&{\overline{4}}\\{\overline{4}}&{\overline{4}}&{\overline{3}}&{\overline{3}}}\,.
\]
This filling has only ascents or equal entries next to each other in a row. However, the corresponding energy is $-1$, so the 
charge needs to be 1. The reason for which the charge is not 0, meaning that $b$ is not a split KN tableau, is that the two 
middle columns in $b$ represent a forbidden configuration in type $D_4$. (Recall from \cite{kancgr} that, in type $D$, in 
addition to the row monotonicity condition for the split tableau in types $B$ and $C$, there are extra conditions for a sequence 
of KN columns to form a KN tableau, and these are given in terms of certain forbidden configurations for a pair formed by a 
right column and the next left column.) 
}
\end{example}

\subsection{Rows of types $B$, $D$, and $C$} 
In type $A$, Nakayashiki and Yamada~\cite{NY:1997} showed that the Lascoux--Sch\"{u}tzenberger charge 
statistic is related to the energy function on tensor products of both rows and columns. 
In particular, in~\cite[Proposition 3.23]{NY:1997} they provide an explicit relation between the energy function
on rows and columns by giving a bijection of each set to the set of semistandard Young tableaux which
preserves the statistics.

We expect a similar result to hold in the other classical types (with slight modifications). 
This claim is motivated by~\cite[Theorem 10.10]{LOS:2010}, which relates the one-dimensional configuration
sum in the large rank limit for columns to the one for rows in the dual type. More explicitly, denote by
$X_{\lambda, B}^{Y_n}(q)$ the one-dimensional configuration sum, that is, the sum of $q^{D(b)}$ over all highest weight
elements $b$ in the crystal $B$ of weight $\lambda$ and type $Y_n$, graded by the energy $D(b)$. 
Then up to an overall power of $q$, $X_{\lambda, B}^{B_n}(q)$ and $X_{\lambda', B'}^{C_n}(q^{-1})$ are equal
for large rank $n$. Here, if $B=B^{r_1,s_1}\otimes \cdots \otimes B^{r_L,s_L}$ then
$B' = B^{s_1,r_1} \otimes \cdots \otimes B^{s_L,r_L}$ and $\lambda'$ is the transpose of $\lambda$ (where we
identify dominant weights with partitions). Note that $X_{\lambda, B}^{B_n}(q) =X_{\lambda, B}^{D_n}(q)$
when $n$ is large~\cite{LOS:2010,SZ:2006}.

In particular, if the charge for columns of types $B_n$ or $D_n$ were known as outlined in Section~\ref{section.columnsBD}
then, using $B=B^{r_1,1}\otimes \cdots \otimes B^{r_L,1}$, the relation between $X_{\lambda, B}^{B_n}(q)$ and 
$X_{\lambda', B'}^{C_n}(q^{-1})$ would provide a way to obtain the charge in type $C_n$ also for single rows 
(as opposed to single columns as treated in this paper).
Computer experiments using {\sc Sage}~\cite{sage} indicate that this duality not only holds on the level of
configuration sums but also on the level of individual terms. We expect that the highest weight crystal elements 
for both columns and rows are in bijection with certain oscillating tableaux, as introduced in~\cite{Berele:1986}, due to the fact that
the recording tableaux for the Robinson--Schensted algorithm for types $B_n,C_n,D_n$ as introduced 
in~\cite{lecsc,Lecouvey:2007} are given by these tableaux.
If this conjecture were to be true, then we would be able to obtain the charge for single rows from the charge for single columns
via this bijection in the stable range (that is, large $n$).

\subsection{Arbitrary order of columns and arbitrary types}\label{generalize} 
Consider a composition $r=(r_N,\ldots,r_1)$, and the crystal $B:=B^{r_N,1}\otimes\cdots\otimes B^{r_1,1}$ in types $A$ and $C$. 
Since the charge is defined only if $r$ is a partition, the computation via the charge of the energy function on some vertex $b$ in 
$B$ must proceed in general indirectly, by first appropriately commuting the columns of $b$ via the combinatorial $R$-matrix 
(which preserves the energy). However, it is desirable to compute the energy directly on $B$. In addition, we would like to 
generalize to $B$ of arbitrary type, i.e., to find a statistic on $B$ expressing the energy, which can be computed using only 
the combinatorial data corresponding to a vertex, instead of various paths in $B$. 

Let $\mu(r)$ be the partition obtained by reordering the parts of $r$. The Ram--Yip formula for the Macdonald polynomial 
$P_{\mu(r)}(x;q,0)$ can be written in a way that is compatible with the composition $r$ rather than the partition $\mu(r)$. 
Recall from~\cite[Proposition 2.7]{lenmpc} that the terms in the above formula (for arbitrary type) correspond to chains of Weyl 
group elements giving rise to paths in the quantum Bruhat graph~\cite{bfpmbo}; 
these chains are partitioned into segments corresponding to the parts of $r$, and the down steps (in Bruhat order) 
are measured by a statistic called {\em level}. In \cite{lalgam} we defined crystal operators on the above chains, both 
classical ones $f_i$, $i>0$, and the affine one $f_0$. We called this construction the {\em quantum alcove model}, as 
it generalizes the alcove model in \cite{lapawg,lapcmc}. The main conjecture is that the new model uniformly describes 
tensor products of column shape KR crystals, for all untwisted affine types. The conjecture is proved in the same paper 
in types $A$ and $C$, by showing that the bijections in \cite{lenmpc} from the objects of the quantum alcove model to 
tensor products of the corresponding KN columns are affine crystal isomorphisms. With S. Naito and M. Shimozono, 
we are working on a uniform proof of the conjecture for all untwisted affine types.  This will imply that the level statistic 
in the Ram-Yip formula expresses the corresponding energy function. In particular, we would obtain a generalization 
of the combinatorial formula (\ref{p1}) to arbitrary type, where the charge is replaced by the level.

\subsection{Demazure crystals and Macdonald polynomials} 
Ion~\cite{ionnmp} showed that, in simply-laced types, one can identify a Macdonald polynomial at $t=0$, namely $P_\mu(x;q,0)$, 
with an affine Demazure character corresponding to a shift in the affine Weyl group. This raises the question of what happens in 
other types. 
The combinatorial formula (\ref{p1}) for $P_\mu(x;q,0)$ of type $C$, together with the decomposition of the corresponding 
tensor product of KR crystals with only the Demazure arrows (studied in Section \ref{section.kyotoC}) suggest that, in general, 
$P_\mu(x;q,0)$ could be identified with a sum of Demazure characters. If so, one would want to determine them explicitly and, 
in particular, to know if all correspond to shifts in the affine Weyl group. 

As Ion's result and the Ram--Yip formula were given for non-symmetric Macdonald polynomials as well, we can also ask 
about the generalization of Ion's result to arbitrary types in the non-symmetric case. There are indications that the 
combinatorial formula~\eqref{p1}, which was conjectured to have a version for arbitrary type (see Section~\ref{generalize}), 
should be replaced with one whose right-hand side is a sum over a subset of the corresponding tensor product of KR crystals. 
If so, it would be desirable to describe this subset in an explicit way. See also~\cite[Section 9.2]{ST:2011}.


\end{document}